 \DeclareMathOperator{\Var}{Var}
\newtheorem{theorem}{Theorem}
\theoremstyle{plain}
\newtheorem{definition}[theorem]{Definition}
\newtheorem{proposition}[theorem]{Proposition}
\newtheorem{corollary}[theorem]{Corollary}
\newtheorem{remark}[theorem]{Remark}
\newcommand{\n}{\noindent}
\newcommand{\R}{\mathbb{R}}
\newcommand{\A}{\mathcal{A}}
\newcommand{\E}{\mathbb{E}}
\newcommand{\B}{\mathcal{B}}
\newcommand{\mfu}{\mathfrak{f}}
\newcommand\bel[1]{\begin{equation}\label{#1}}
\newcommand\ee{\end{equation}}
\def\ds{\displaystyle}
\def\eqconst{\bowtie}
\numberwithin{equation}{section}
\numberwithin{theorem}{section}
\begin{document}

\today

\title[Parameter Estimation in Hyperbolic Equations]{Parameter Estimation in Diagonalizable Stochastic Hyperbolic Equations}
\author{W. Liu}
\curraddr[W. Liu]{Department of Mathematics, USC\\
Los Angeles, CA 90089 USA\\
tel. (+1) 213 821 1480; fax: (+1) 213 740 2424}
\email[W. Liu]{liu5@usc.edu}
\author{S. V. Lototsky}
\curraddr[S. V. Lototsky]{Department of Mathematics, USC\\
Los Angeles, CA 90089 USA\\
tel. (+1) 213 740 2389; fax: (+1) 213 740 2424}
\email[S. V. Lototsky]{lototsky@usc.edu}
\urladdr{http://www-rcf.usc.edu/$\sim$lototsky}

\subjclass[2000]{Primary  62F12; Secondary 60G15, 60H15,
 60G30, 62M05}
\keywords{Cylindrical Brownian motion,
  Second-Order Stochastic Equations,
  Stochastic Hyperbolic Equations}

 \begin{abstract}
A parameter estimation problem is considered for a  linear stochastic
hyperbolic equation driven by additive space-time Gaussian white noise.
The damping/amplification operator is allowed to be unbounded.
 The estimator is of spectral type and utilizes
 a finite number of the
spatial Fourier coefficients of the solution.
The asymptotic properties of the estimator are studied
as the number of the Fourier coefficients increases, while
the observation time and the noise intensity are fixed.
\end{abstract}

\maketitle

\section{Introduction}
A typical example of a parabolic equation is the heat equation
$$
u_t=u_{xx};
$$
a typical example of a hyperbolic equation is the wave equation
$$
u_{tt}= u_{xx}.
$$
In a more abstract setting, if $\A$ is linear operator such that
$\dot{u}+\A u=0$ is a parabolic equation, then
$$
\ddot{u}+\A u=0
$$
 is natural to call a hyperbolic equation; $\dot{u}$ and  $\ddot{u}$ are the first and second time derivatives of $u$.

 {\em Damping}  in a hyperbolic equations
 is introduced via a term depending on the first time derivative of the
 solution. For example, a damped wave equation is
$$
u_{tt}= u_{xx} -a u_t, \ a>0.
$$
Indeed, if we define the total energy $E(t)=\int\big(u_t^2(t,x)+ u_x^2(t,x)\big)dx$,
then integration by parts shows that
$$
\frac{d}{dt}{E}(t)=-a\int u_t^2(t,x)dx;
$$
it also shows that $a<0$ (negative damping) corresponds to {\em amplification}.
More generally, we write a damped linear hyperbolic equation in an abstract form
\begin{equation}
\label{intr1}
\ddot{u}+\A u = \B \dot{u},
\end{equation}
where $\A$  and $\B$ are linear operators on a separable Hilbert space $H$;
depending on the
properties of the operator $\B$, the result can be either  damping or
amplification.

In this paper, we  consider a stochastic version of \eqref{intr1},
perturbed by additive space-time white noise and with operators
$\A$ and $\B$  specified up to an unknown parameter:
\begin{equation}
\label{E:3-15}
\ddot{u}+(\A_0+\theta_1\A_1)u=(\B_0+\theta_2\B_1)u_t+\dot{W}, \qquad
0<t\leq T.
\end{equation}

The objectives are
\begin{itemize}
\item to determine the conditions on the operators so that the equation
has a generalized solution that is a square-integrable random element with
values in a suitable Hilbert space;
\item to construct a maximum likelihood estimator of the unknown
parameters $\theta_1, \theta_2$ using a finite-dimensional projection of the
solution, and to study the asymptotic properties of the estimator as the dimension of the projection increases.
\end{itemize}

For stochastic parabolic equations with one unknown parameter,
  a similar problem was
first suggested by Huebner, Khasminskii and Rozovskii \cite{HKR}
 and was further investigated by Huebner and Rozovskii \cite{HubR}.
  Estimation of several parameters in parabolic
 equations has also been studied \cite{Hub1,Lot1}. For stochastic
 hyperbolic equations, most of these problems remain open. Since the
 equation is second-order in time, it is natural to start with two
 unknown parameters. In the case of the  wave equation, these
 parameters correspond to the propagation  speed of the wave and the
 damping coefficient \cite{LL}. 

 With precise definitions to come later, at this
point we interpret $\dot{W}(t)$ as a formal sum
\[
\dot{W}(t)=\sum_{k\geq1}h_k\dot{w}_k(t),
\]
where $\{h_k,\ k\geq1\}$ is an orthonormal basis in the
 Hilbert space $H$,  and $w_k(t)$ are
independent standard Brownian motions. We look for the solution of
\eqref{E:3-15} as a Fourier series
\begin{equation}
\label{intr-fk1}
u(t)=\sum_{k\geq1}u_k(t)h_k,
\end{equation}
and call it a {\tt generalized solution}.
 If the
trajectories of $u_k(t)$  are observed for $1\leq k\leq N$ and all
$0<t<T$, then there exists a closed-form expression for
maximum likelihood estimator of
$(\theta_1, \theta_2)$ in terms of $u_k$ and $\dot{u}_k$;
see Section \ref{sec:Est} below.

The main technical assumptions about the equation are
\begin{itemize}
\item zero initial conditions (to simplify the
presentation);
\item the ability to
write equation \eqref{E:3-15} as an infinite system of uncoupled stochastic
ordinary differential equation  (this is essential in the construction and the analysis of the estimator). In other words, we assume that the equation is
 {\tt diagonalizable}: the operators $\A_0$,
$\A_1$, $\B_0$ and $\B_1$ have a common system of eigenfunctions
$\{h_k,\ k\geq1\}$:
\begin{equation}
\label{eigenvalues}
\mathcal{A}_0h_k=\kappa_k h_k,\quad \mathcal{B}_0=\rho_k h_k, \quad
\mathcal{A}_1h_k=\tau_k h_k,\quad \mathcal{B}_1=\nu_k h_k
\end{equation}
and this system is  an orthonormal basis in the Hilbert space $H$.
\item {\tt hyperbolicity}, that is,
\begin{enumerate}
\item
there exist  positive numbers $C^*$, $c_1, c_2$
 such that $\{\kappa_k+\theta\tau_k+C^*,\ k\geq 1\}$ is a positive,
 non-decreasing, and unbounded  sequence
 for all $\theta\in \Theta_1$ and
 \bel{u-bnd0-intr}
c_1\leq  \frac{\kappa_k+\theta\tau_k+C^*}{\kappa_k+\theta'\tau_k+C^*}
\leq c_2
\ee
 for all $\theta, \theta'\in \Theta_1$;
 \item there exist positive numbers $C,\,J$ such that, for all $k\geq J$ and
 all $\theta_1\in \Theta_1, \ \theta_2\in \Theta_2$,
\begin{equation}
\label{eig-val-cond-intr}
T(\rho_k+\theta_2\nu_k)\leq \ln(\kappa_k+\theta_1\tau_k) + C.
\end{equation}
\end{enumerate}
\end{itemize}
If equation \eqref{E:3-15} is diagonalizable and the 
 solution has the form \eqref{intr-fk1}, then the Fourier coefficient $u_k$ satisfies
$$
\ddot{u}_k - (\rho_k+\theta \nu_k)\dot{u}_k
+(\kappa_k+\theta_1\tau_k)u_k= \dot{w}_k,\ u_k(0)=\dot{u}_k(0)=0.
$$
We show in Section  \ref{sec:Sol} that if the
equation is also hyperbolic and $X$ is a Hilbert space
such that $H\subset X$ and the embedding operator $\jmath:\ H\to X$ is
 Hilbert-Schmidt, then $u$ is an $X$-valued process.

The maximum likelihood estimators of $\theta_1$ and $\theta_2$
are constructed in Section \ref{sec:Est} using the processes
$u_k,\  \dot{u}_k,\ k=1,\ldots, N$ (the corresponding formulas are 
too complicated to present in the Introduction). 
 Analysis of these estimators in the limit $N\to \infty$
 is the main objective of the paper and
 is carried out in Sections \ref{sec:AC} and \ref{sec:GC}.
Here is the main result of the paper for the case when $\A_i, \B_i$ are
(pseudo)differential elliptic operators.

\begin{theorem}
\label{th:inter-main}
  Assume that equation \eqref{E:3-15} is diagonalizable and
hyperbolic and that  $\A_i, \B_i$ are   positive-definite
 elliptic self-adjoint differential or pseudo-differential operators  on a
 smooth bounded domain in $\R^d$ with suitable boundary conditions or on a
 smooth compact $d$-dimensional manifold. Then
 \begin{enumerate}
 \item the maximum likelihood estimator of $\theta_1$ is consistent and
 asymptotically normal in the limit $N\to \infty$ if and only if
 \begin{equation}
 \label{intr-alg-order-cond1-d}
 \mathrm{order}(\A_1)\geq \frac{\mathrm{order}(\A_0+\theta_1\A_1)
 +\mathrm{order}(\B_0+\theta_2\B_1)-d}{2};
\end{equation}
\item the maximum likelihood estimator of $\theta_2$ is consistent and
 asymptotically normal in the limit $N\to \infty$ if and only if
\begin{equation}
 \label{intr-alg-order-cond2-d}
 \mathrm{order}(\B)_1\geq \frac{\mathrm{order}(\B_0+\theta_2\B_1)-d}{2}.
 \end{equation}
 \end{enumerate}
\end{theorem}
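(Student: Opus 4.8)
The plan is to reduce both parts to the general criteria of Sections~\ref{sec:AC} and~\ref{sec:GC} — which, for a diagonalizable equation, characterize consistency and asymptotic normality of the spectral maximum likelihood estimators through the growth of explicit spectral sums — and then to evaluate those sums with Weyl's eigenvalue asymptotics. Recall that each Fourier coefficient satisfies the damped linear oscillator $\ddot u_k-b_k\dot u_k+a_ku_k=\dot w_k$, $u_k(0)=\dot u_k(0)=0$, with $a_k=\kappa_k+\theta_1\tau_k>0$ and $b_k=\rho_k+\theta_2\nu_k$. Since the noise enters only the $\dot u_k$-component with unit coefficient and the drift $b_k\dot u_k-a_ku_k$ is affine in $(\theta_1,\theta_2)$ with $\partial_{\theta_1}=-\tau_ku_k$ and $\partial_{\theta_2}=\nu_k\dot u_k$, the log-likelihood of the $N$-dimensional projection is a quadratic form in the parameters whose Hessian is the observed information matrix with entries proportional to $\sum_{k\le N}\tau_k^2\int_0^T u_k^2$, $\sum_{k\le N}\tau_k\nu_k\int_0^T u_k\dot u_k$, $\sum_{k\le N}\nu_k^2\int_0^T\dot u_k^2$. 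Consequently, by the results of Sections~\ref{sec:AC} and~\ref{sec:GC}, $\widehat\theta_1$ is consistent and asymptotically normal iff $\mathcal{I}_1^{(N)}:=\sum_{k\le N}\tau_k^2\int_0^T\E u_k^2(t)\,dt\to\infty$, and $\widehat\theta_2$ iff $\mathcal{I}_2^{(N)}:=\sum_{k\le N}\nu_k^2\int_0^T\E\dot u_k^2(t)\,dt\to\infty$ (the remaining hypotheses of those criteria — a law of large numbers for the random information and a Lindeberg condition — hold automatically in this Gaussian setting, and the off-diagonal term is shown below to be of lower order).

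The heart of the argument is a pair of uniform two-sided moment estimates. Writing $g_k$ for the Green's function of the oscillator one has $\int_0^T\E u_k^2(t)\,dt=\int_0^T(T-s)g_k^2(s)\,ds$, $\int_0^T\E\dot u_k^2(t)\,dt=\int_0^T(T-s)\dot g_k^2(s)\,ds$ and $\int_0^T\E(u_k\dot u_k)\,dt=\frac12\E u_k^2(T)$. Evaluating these elementary integrals — distinguishing complex characteristic roots ($b_k^2<4a_k$) from real ones and tracking whether $b_k$ stays bounded — and invoking hyperbolicity to control the amplification factor (condition~\eqref{eig-val-cond-intr} gives $b_k\le T^{-1}\ln a_k+C/T$, so $e^{Tb_k}$ is bounded, and condition~(1) gives $a_k\to\infty$), one obtains, uniformly for large $k$ and over the compact parameter sets,
\[
\int_0^T\E u_k^2(t)\,dt\ \asymp\ \frac{1}{a_k(1+|b_k|)},\qquad
\int_0^T\E\dot u_k^2(t)\,dt\ \asymp\ \frac{1}{1+|b_k|},\qquad
\E u_k^2(T)\ \asymp\ \frac{1}{a_k(1+|b_k|)}.
\]
These are, up to lower-order factors, exactly the bounds already used in Section~\ref{sec:Sol} to place $u$ in $X$.

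It then remains to insert Weyl's law: for a positive-definite elliptic self-adjoint (pseudo)differential operator of order $m$ on a $d$-dimensional smooth compact manifold (or bounded domain with admissible boundary conditions) the $k$-th eigenvalue is $\asymp k^{m/d}$. Hence $a_k=\kappa_k+\theta_1\tau_k\asymp k^{\mathrm{order}(\A_0+\theta_1\A_1)/d}$ (uniformly in $\theta_1$, by the ratio bound~\eqref{u-bnd0-intr}), $\tau_k\asymp k^{\mathrm{order}(\A_1)/d}$, $1+|b_k|\asymp k^{\mathrm{order}(\B_0+\theta_2\B_1)/d}$ and $\nu_k\asymp k^{\mathrm{order}(\B_1)/d}$, the order of each operator already incorporating any cancellation of principal symbols. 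Substituting into the two sums turns each into a generalized harmonic series,
\[
\mathcal{I}_1^{(N)}\ \asymp\ \sum_{k\le N}k^{(2\,\mathrm{order}(\A_1)-\mathrm{order}(\A_0+\theta_1\A_1)-\mathrm{order}(\B_0+\theta_2\B_1))/d},\qquad
\mathcal{I}_2^{(N)}\ \asymp\ \sum_{k\le N}k^{(2\,\mathrm{order}(\B_1)-\mathrm{order}(\B_0+\theta_2\B_1))/d},
\]
and $\sum_{k\le N}k^{\alpha}$ diverges precisely when $\alpha\ge-1$; the two resulting exponent inequalities are exactly \eqref{intr-alg-order-cond1-d} and \eqref{intr-alg-order-cond2-d}. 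The same substitution shows the off-diagonal sum $\sum_{k\le N}\tau_k\nu_k\,\E u_k^2(T)$ has general term equal to $k^{-\mathrm{order}(\A_0+\theta_1\A_1)/(2d)}$ times the geometric mean of the general terms of $\mathcal{I}_1^{(N)}$ and $\mathcal{I}_2^{(N)}$; since $\mathrm{order}(\A_0+\theta_1\A_1)>0$ this forces the off-diagonal information to be $o\big((\mathcal{I}_1^{(N)}\mathcal{I}_2^{(N)})^{1/2}\big)$, so the estimation of $\theta_1$ and of $\theta_2$ decouples asymptotically and each order condition governs its own estimator.

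The one genuinely non-routine step is the uniform two-sided control of the moments: one has to treat the transition between the oscillatory (underdamped) and the overdamped regimes — in particular the near-critical case $b_k^2\approx4a_k$, where the two characteristic roots nearly coincide — and to verify that the hyperbolicity bound~\eqref{eig-val-cond-intr} is exactly what keeps the amplification from spoiling the rates. Once these estimates are secured, the passage to the order conditions is just Weyl's law together with the comparison test for series.
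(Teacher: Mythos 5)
Your proposal is correct and follows essentially the same route as the paper: reduce to the divergence of the Fisher-information sums $\Psi_{1,N},\Psi_{2,N}$ (sufficiency via the law of large numbers and martingale CLT of Section \ref{sec:AC}, necessity via Proposition \ref{prop:necessary}), establish the two-sided moment asymptotics for $\int_0^T\E u_k^2\,dt$, $\int_0^T\E v_k^2\,dt$ and $\E u_k^2(T)$ from the fundamental solution of the damped oscillator (these are the paper's relations \eqref{main-asympt-u}--\eqref{main-asympt-v} with $M(x)\asymp(1+|x|)^{-1}$), and then convert to order conditions by Weyl's law \eqref{asympt-eig01}. Your treatment of the cross term $\Psi_{12,N}$ via the extra factor $k^{-\mathrm{order}(\A)/(2d)}$ is also the same mechanism the paper uses to decouple the two estimators.
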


Similar to the parabolic case (Huebner \cite{Hub1}), the results of the paper extend to a more general estimation problem
$$
\ddot{u}+\sum_{i=0}^{n}\theta_{1i}\A_iu
=\sum_{j=0}^m\theta_{2j}\B_j\dot{u}+\dot{W},
$$
as long as all the operators $\A_i, \B_j$ have a common system of
eigenfunctions. For example, in the setting similar to Theorem \ref{th:inter-main}, the coefficient $\theta_{1p}$ can be
consistently estimated if and only if
$$
\mathrm{order}(\A_p)\geq \frac{\mathrm{order}\left(\sum_{i=0}^{n}\theta_{1i}\A_i \right)
 +\mathrm{order}\left(\sum_{j=0}^m\theta_{2j}\B_j  \right)-d}{2}.
$$

Throughout the presentation below, we fix a stochastic basis
 $$
 \mathbb{F}=(\Omega, \mathcal{F},\{\mathcal{F}_t\}_{t\geq 0},
\mathbb{P})
$$
with the usual assumptions (completeness of $\mathcal{F}_0$ and
right-continuity of $\mathcal{F}_t$). We also assume that
$\mathbb{F}$ is large enough to support countably many independent
standard Brownian motions. For a random variable $\xi$, $\E \xi$ and
$\Var \xi$ denote the expectation and variance respectively.
The time derivative of a function is denote either
by  a dot on top (as in $\dot{u}$) or by a subscript $t$ 
(as in $u_t$). 

The following notations are used for two non-negative sequences
$a_n, b_n,\ n\geq 1$:
\begin{equation}
\label{eqconst}
a_n\eqconst b_n
\end{equation}
if there exist positive numbers $c_1,c_2$ such that $c_1\leq a_n/b_n \leq c_2$
for all sufficiently large $n$;
\begin{equation}
\label{not-asymp1}
a_n\asymp b_n
\end{equation}
if
\begin{equation}
\label{not-asymp1-1}
 \lim_{k\to \infty} \frac{a_k}{b_k}
=c\ \ {\rm for \ some } \ \  c>0;
\end{equation}
\begin{equation}
\label{not-asymp2}
a_n\sim b_n
\end{equation}
if \eqref{not-asymp1-1} holds with $c=1$.
 Note that if $a_n\sim b_n$ and
$\sum_n a_n$ diverges, then $\sum_{k=1}^n a_k \sim \sum_{k=1}^n b_k$.

Finally, we recall that a cylindrical Brownian motion $W=W(t)$, $t\geq 1$, over
(or on) a Hilbert space $H$ is a linear mapping
$$
W: f\mapsto W_f(\cdot)
$$
 from
$H$ to the space of zero-mean Gaussian processes such that, for
every $f,g\in H$ and $t,s>0$,
\begin{equation}
\label{CBM} \E\big(W_f(t)W_g(s)\big)=\min(t,s)(f,g)_H.
\end{equation}

A cylindrical Brownian motion $W$ is often written as a generalized
Fourier series
\begin{equation}
\label{CBM2} W(t)=\sum_{k\geq 1} w_k(t) h_k,
\end{equation}
where $w_k=W_{h_k}$. The corresponding space-time white noise is
written as
$$
\dot{W}(t)=\sum_{k\geq 1} \dot{w}_k(t)h_k.
$$

\section{Diagonalizable Stochastic Hyperbolic Equations}
\label{sec:Sol}

We start by introducing the following objects:
\begin{enumerate}
\item  $H$,  a separable Hilbert space with an orthonormal basis
$\{h_k,\ k\geq 1\}$;
\item $X$, a separable Hilbert space such that $H$ is densely and continuously
embedded into $X$ and
\begin{equation}
\label{HS-emb}
\sum_{k\geq 1} \|h_k\|_X^2<\infty
\end{equation}
 (in other words, the embedding operator from $H$ to $X$  is Hilbert-Schmidt);
\item  $\A_0,\  \A_1,\ \B_0,\ \B_1$,
linear operators on $H$;
\item $\Theta_1, \ \Theta_2$, two compact sets in $\R$;
\item $\theta_1,  \ \theta_2$, two real numbers, $\theta_1\in \Theta_1$,
$\theta_2\in \Theta_2$;
\item $(\Omega, \mathcal{F},\ (\mathcal{F}_t)_{t\geq 1}, \mathbb{P})$, a
stochastic basis with the usual assumptions and a countable collection of independent
standard Brownian motions $\{w_k=w_k(t),\ k\geq 1\}$.
\end{enumerate}

In this setting, a cylindrical Brownian motion $W=W(t)$ on $H$
is a continuous $X$-valued Gaussian process with representation
\begin{equation}
\label{cbm}
W(t)=\sum_{k\geq 1} h_k w_k(t).
 \end{equation}
The process $W$ indeed has values in $X$ rather than $H$ because 
$$
\E \|W(t)\|_X^2=
t\sum_{k\geq 1} \|h_k\|_X^2< \infty.
$$

For fixed non-random $T>0$, consider the second-order stochastic evolution equation
\begin{equation}
\label{E:3-151}
u_{tt}(t)+(\A_0+\theta_1\A_1)u(t)=(\B_0+\theta_2\B_1)u_t(t)+\dot{W}(t), \qquad 0<t\leq T,
\end{equation}
with zero initial conditions $u(0)=u_t(0)=0$.

\begin{definition}\label{D:prime}
   Equation \eqref{E:3-151} is called {\tt diagonalizable} if the operators
    $\mathcal{A}_0$, $\mathcal{A}_1$,
 $\mathcal{B}_0$, and $\mathcal{B}_1$ have a common
system of eigenfunctions $\{h_k,\ k\geq 1\}.$
\end{definition}

We will refer to $\A=\A_0+\theta_1\A_1$ and $\B=\B_0+\theta_2\B_1$
as the {\tt evolution} and {\tt dissipation} operators, respectively, and
use  notations \eqref{eigenvalues} for the
eigenvalues of the operators $\mathcal{A}_i,\  \mathcal{B}_i$.  Hyperbolicity of the equation means that the evolution operator is bounded from
below and dominates, in some sense, the dissipation operator. More precisely, we have
\begin{definition}
\label{def-parab}
A diagonalizable equation \eqref{E:3-151} is called
{\tt hyperbolic} on the time interval $[0,T]$ if
\begin{enumerate}
\item
there exist  positive numbers $C^*$, $c_1, c_2$
 such that $\{\kappa_k+\theta\tau_k+C^*,\ k\geq 1\}$ is a positive,
 non-decreasing, and unbounded  sequence
 for all $\theta\in \Theta_1$ and
 \bel{u-bnd0}
c_1\leq  \frac{\kappa_k+\theta\tau_k+C^*}{\kappa_k+\theta'\tau_k+C^*}
\leq c_2
\ee
 for all $\theta, \theta'\in \Theta_1$;
 \item there exist positive numbers $C,\,J$ such that, for all $k\geq J$ and
 all $\theta_1\in \Theta_1, \ \theta_2\in \Theta_2$,
\begin{equation}
\label{eig-val-cond}
T(\rho_k+\theta_2\nu_k)\leq \ln(\kappa_k+\theta_1\tau_k) + C.
\end{equation}
 \end{enumerate}
 \end{definition}

 Condition \eqref{eig-val-cond} means that there is no restriction on the
 strength of dissipation, but amplification must be weak.
 For example, let  $\boldsymbol{\Delta} $ be the Laplace operator in
a smooth bounded domain $G\subset \R^d$ with zero boundary conditions, and
$H=L_2(G)$.
 Then each of the following equations is diagonalizable and hyperbolic
 on $[0,T]$ for all $T>0$:
\begin{equation}
\label{example-eq}
\begin{split}
u_{tt}&=\boldsymbol{\Delta} u + u_t + \dot{W}, \ u_{tt}=\boldsymbol{\Delta} u- u_t+\dot{W},\\
u_{tt}&=\boldsymbol{\Delta} (u+u_t)+\dot{W},\ u_{tt}=\boldsymbol{\Delta} u-\boldsymbol{\Delta}^2u_t+\dot{W},
\end{split}
\end{equation}
while equations
$$
u_{tt}=\boldsymbol{\Delta} (u-u_t)+\dot{W},\ u_{tt}=\boldsymbol{\Delta} u+\boldsymbol{\Delta}^2u_t+\dot{W}
$$
 are diagonalizable but not hyperbolic on any $[0,T]$.
 To construct an example of an equation that is hyperbolic on every time interval $[0,T]$ and has unbounded amplification, take $\theta_1=\theta_2=1$ and 
 consider  the operators  with eigenvalues $\kappa_k= \rho_k=0$, $\tau_k=e^k$, $\nu_k=\ln k$.
  
 The following result shows that, in a hyperbolic equation, the evolution operator
 is uniformly bounded from below.
 \begin{proposition}
 \label{prop-ubnd}
 If equation \eqref{E:3-151} is diagonalizable and hyperbolic, then
 \bel{u-unbd}
 \lim_{k\to \infty}\big(\kappa_k+\theta\tau_k\big)=+\infty
 \ee
  uniformly in $\theta \in \Theta_1,$ and there
 exists an index $J\geq 1$ and a number $c_0$ such that, for all $k\geq J$ and
 $\theta\in \Theta_1$,
 \begin{align}
 \label{pos-mu}
 &\kappa_k+\theta\tau_k>1,\\
 \label{u-bnd}
 &\frac{|\tau_k|}{\kappa_k+\theta\tau_k} \leq c_0.
 \end{align}
 \end{proposition}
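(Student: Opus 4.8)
My plan is to derive all three conclusions from part~(1) of Definition~\ref{def-parab} alone (the dissipation condition \eqref{eig-val-cond} plays no role here). Write $\mu_k(\theta):=\kappa_k+\theta\tau_k+C^*$. \emph{Step 1 (uniform divergence and positivity).} First I would fix one reference value $\theta_0\in\Theta_1$. By hypothesis the sequence $\{\mu_k(\theta_0)\}_{k\ge1}$ is positive, non-decreasing and unbounded, so it tends to $+\infty$. The lower bound in \eqref{u-bnd0} gives $\mu_k(\theta)\ge c_1\mu_k(\theta_0)$ for every $\theta\in\Theta_1$, hence
\[
\inf_{\theta\in\Theta_1}\bigl(\kappa_k+\theta\tau_k\bigr)\ \ge\ c_1\mu_k(\theta_0)-C^*\ \xrightarrow[k\to\infty]{}\ +\infty,
\]
which is \eqref{u-unbd}. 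Since $\theta\mapsto\kappa_k+\theta\tau_k$ is affine and $\Theta_1$ compact, this infimum is attained, and by the display it exceeds $1$ for all $k$ beyond some $J_1$; this is \eqref{pos-mu}.

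\emph{Step 2 (the ratio bound \eqref{u-bnd}).} This is the only step that needs an idea. I would invoke \eqref{u-bnd0} at two \emph{distinct} parameter values $\theta'<\theta''$ of $\Theta_1$. Subtracting isolates $\tau_k$: since $\mu_k(\theta')-\mu_k(\theta'')=(\theta'-\theta'')\tau_k$ and each $\mu_k$ is positive,
\[
|\tau_k|\ =\ \frac{|\mu_k(\theta')-\mu_k(\theta'')|}{\theta''-\theta'}\ \le\ \frac{\mu_k(\theta')+\mu_k(\theta'')}{\theta''-\theta'}.
\]
Applying the upper bound in \eqref{u-bnd0} to the pairs $(\theta',\theta)$ and $(\theta'',\theta)$ for an arbitrary $\theta\in\Theta_1$ gives $\mu_k(\theta')\le c_2\mu_k(\theta)$ and $\mu_k(\theta'')\le c_2\mu_k(\theta)$, so $|\tau_k|\le\frac{2c_2}{\theta''-\theta'}\,\mu_k(\theta)$ for all $k$ and all $\theta$. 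Finally, Step~1 supplies $J_2$ with $\kappa_k+\theta\tau_k\ge C^*$ for $k\ge J_2$, uniformly in $\theta$, so $\mu_k(\theta)\le 2(\kappa_k+\theta\tau_k)$ there; this yields \eqref{u-bnd} with $c_0=4c_2/(\theta''-\theta')$ and $J=\max(J_1,J_2)$.

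\emph{Where the difficulty is.} There is no serious obstacle once Step~1 is done: the crux is the simple observation that $\tau_k$ is the slope of the affine map $\theta\mapsto\kappa_k+\theta\tau_k$, hence is controlled by differences of its values, which \eqref{u-bnd0} forces to be mutually comparable. The one point to watch is that this needs $\Theta_1$ to contain two distinct points; in a genuine estimation problem $\Theta_1$ is a non-degenerate compact interval (and the single-point case is vacuous), so I would simply record that. Everything else — attaining infima, the uniformity of $J$ in $\theta$, and passing between $\kappa_k+\theta\tau_k+C^*$ and $\kappa_k+\theta\tau_k$ — is routine bookkeeping built on Step~1.
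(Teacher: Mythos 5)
Your proof is correct, and for the key inequality \eqref{u-bnd} it takes a genuinely different route from the paper. Your Step 1 matches the paper's argument for \eqref{u-unbd} and \eqref{pos-mu} (both read uniform divergence off the lower bound in \eqref{u-bnd0} against a fixed reference parameter). For \eqref{u-bnd}, however, the paper argues by contradiction: assuming the ratio $|\tau_k|/(\kappa_k+\theta\tau_k)$ is unbounded along some sequence $(k_j,\theta_j)$, it uses compactness of $\Theta_1$ to extract $\theta_j\to\theta^{\circ}$, deduces $\kappa_{k_j}/\tau_{k_j}\to-\theta^{\circ}$ and $|\tau_{k_j}|\to\infty$, and then shows the ratio in \eqref{u-bnd0} blows up for any $\theta\neq\theta^{\circ}$. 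You instead argue directly, observing that $\tau_k$ is the slope of the affine map $\theta\mapsto\kappa_k+\theta\tau_k$, so it is a difference quotient of two values of $\mu_k(\cdot)$, each of which \eqref{u-bnd0} makes comparable to $\mu_k(\theta)$ for every $\theta$. Your approach yields an explicit constant $c_0=4c_2/(\theta''-\theta')$ and avoids any subsequence extraction or use of compactness in this step; the paper's contradiction argument is less quantitative but arrives at the same place. Both arguments, yours and the paper's, tacitly require $\Theta_1$ to contain at least two points (the paper needs some $\theta\neq\theta^{\circ}$ in $\Theta_1$ to derive its contradiction, and indeed the statement can fail for a singleton $\Theta_1$, e.g.\ $\kappa_k=k$, $\tau_k=k^2$, $\Theta_1=\{0\}$), so your explicit flagging of that hypothesis is a point in your favor rather than a gap.
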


 \begin{proof} To simplify the notations, define
 $$
 \lambda_k(\theta)=\kappa_k+\theta\tau_k.
 $$
 Since  $\{\lambda_k(\theta)+C^*,\ k\geq 1\}$ is a positive,
 non-decreasing, and unbounded  sequence
 for all $\theta\in \Theta_1$ and \eqref{u-bnd0} holds,
  we have \eqref{u-unbd}, and then
 \eqref{pos-mu} follows.

 To prove \eqref{u-bnd}, we argue by contradiction.
 Assume that the sequence
 $\{|\tau_k|\,\lambda^{-1}_k(\theta),k\geq 1\} $ is not uniformly bounded. Then
 there is a sequence $ \{|\tau_{k_j}|\,\lambda^{-1}_{k_j}(\theta_j),\ j\geq 1\}$
 such that
 \bel{u-bnd-pr1}
 \lim_{j\to \infty} \frac{|\tau_{k_j}|}{\theta_{j}\tau_{k_j}+\kappa_{k_j}}=+\infty.
 \ee
 With no loss of generality, assume that $\tau_{k_j}>0$, and,
 since $\Theta_1$ is compact, we also assume that
 $\lim_{j\to \infty} \theta_j=\theta^{\circ}\in \Theta_1$
 (if not, extract a further sub-sequence).

 Then \eqref{u-bnd-pr1} implies
 \bel{u-bnd-pr2}
 \lim_{j\to \infty} \frac{\kappa_{k_j}}{\tau_{k_j}}=-\theta^{\circ}.
 \ee
 Note that $\lim_{j\to \infty} |\tau_{k_j}|=+\infty$, because
 $\lim_{j\to \infty}(\theta^{\circ}\tau_{k_j}+\kappa_{k_j})=+\infty$.
 Consequently,
 $$
 \lim_{j\to \infty}
 \frac{\lambda_{k_j}(\theta)+C^*}{\lambda_{k_j}(\theta^{\circ})+C^*}=
 \frac{\theta-\theta^{\circ}}
 {\theta^{\circ}+\lim_{j\to \infty} ({\kappa_{k_j}}/{\tau_{k_j}})}=\infty,\ \
 \theta\not=\theta^{\circ}.
 $$
 As a result,  if \eqref{u-bnd} fails, then so does
  \eqref{u-bnd0} for  $\theta\not=\theta^{\circ}$,
 $\theta'=\theta^{\circ}$.
 \end{proof}

To state the result about existence and uniqueness of solution for
\eqref{E:3-151}, note that we do not have enough information about the
operators $\A_i$ and $\B_i$ to define the traditional variational solution
because we are not assuming that the operators act in a normal triple of Hilbert spaces --- the usual setting to define
a variational solution (see, for example, Chow \cite[Section 6.8]{Chow-Spde}).
On the other hand,  if the
operators $\A_i, \B_i$ {\em were} bounded and if the process
$W$ {\em were} $H$-valued, then there would be a unique process
$v=v(t)$ with continuous trajectories in $H$ such that
$u(t)=\int_0^t v(s)ds$ and
$$
v(t)+\int_0^t (\A_0+\theta_1 \A_1)u(s)ds=\int_0^t (\B_0+\theta_2 \B_1)v(s)ds+W(s);
$$
 see  Chow \cite[Theorem 6.8.2]{Chow-Spde}.
If, in addition, equation \eqref{E:3-151} is diagonalizable,  then $u$ would have  the following expansion in the basis $\{h_k,\ k\geq 1\}$:
\begin{align}
& u(t)=\sum_{k\geq 1} u_k(t)h_k, \label{sol-FSexp} \\
\ddot{u}_k(t)&-(\rho_k+\theta_2\nu_k)\dot{u}_k(t)
+(\kappa_k+\theta_1\tau_k)u_k(t)=
\dot{w}_k(t),\ u_k(0)=\dot{u}_k(0)=0.\label{sol-coef-eq}
\end{align}
The basis $\{h_k,\ k\geq 1\}$ thus becomes a natural collection of 
test functions. 

Since the operators $\A_i,\ \B_i$ are in general not bounded on $H$ and the
process $W$ is not $H$-valued, we use the auxiliary space $X$ and
establish the following result.

\begin{theorem}
\label{th:main1}
Assume that equation \eqref{E:3-151} is diagonalizable and hyperbolic.
Then there is a unique adapted
$X$-valued process $u=u(t)$ with representation \eqref{sol-FSexp},
\eqref{sol-coef-eq}; we call the process $u$ a {\tt ge\-ne\-ra\-lized solution} of
\eqref{E:3-151}. If, in addition, there exists a real number
$C_0$ such that  $\theta_2\nu_k+\rho_k\leq C_0$ for all $k$, then
$v(t)=\dot{u}(t)$ is also an $X$-valued process.
\end{theorem}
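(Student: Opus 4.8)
The plan is to solve each scalar equation \eqref{sol-coef-eq} explicitly, estimate the moments $\E u_k^2(t)$ and $\E \dot u_k^2(t)$ uniformly on $[0,T]$, and then use the Hilbert--Schmidt embedding \eqref{HS-emb} to sum these bounds. First I would write $\lambda_k = \kappa_k+\theta_1\tau_k$ and $\beta_k = \rho_k+\theta_2\nu_k$, so that \eqref{sol-coef-eq} reads $\ddot u_k - \beta_k\dot u_k + \lambda_k u_k = \dot w_k$ with zero initial conditions. Its solution is the stochastic convolution $u_k(t)=\int_0^t \Phi_k(t-s)\,dw_k(s)$, where $\Phi_k$ is the fundamental solution (Green's function) of the homogeneous equation, i.e. the solution of $\ddot\Phi_k - \beta_k\dot\Phi_k+\lambda_k\Phi_k=0$ with $\Phi_k(0)=0$, $\dot\Phi_k(0)=1$. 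By the It\^o isometry,
\[
\E u_k^2(t)=\int_0^t \Phi_k^2(t-s)\,ds=\int_0^t\Phi_k^2(s)\,ds,\qquad
\E \dot u_k^2(t)=\int_0^t \dot\Phi_k^2(s)\,ds,
\]
the latter because $\dot u_k(t)=\int_0^t\dot\Phi_k(t-s)\,dw_k(s)$ (here $\dot\Phi_k(0)=1$, and differentiating under the stochastic integral is justified since $\Phi_k$ is smooth).

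The next step is to bound $\Phi_k$ explicitly. By Proposition \ref{prop-ubnd}, for $k\geq J$ we have $\lambda_k>1$, $\lambda_k\to\infty$, and $|\tau_k|/\lambda_k\leq c_0$; combined with hyperbolicity \eqref{eig-val-cond}, $\beta_k\leq (\ln\lambda_k+C)/T$. The characteristic roots are $r_k^{\pm}=\tfrac{\beta_k}{2}\pm\sqrt{\tfrac{\beta_k^2}{4}-\lambda_k}$. For large $k$, since $\lambda_k\to\infty$ dominates $\beta_k^2$ (as $\beta_k=O(\ln\lambda_k)$), the discriminant is negative, the roots are complex with real part $\beta_k/2$ and imaginary part $\omega_k=\sqrt{\lambda_k-\beta_k^2/4}\eqconst\sqrt{\lambda_k}$, so
\[
\Phi_k(s)=\frac{e^{\beta_k s/2}}{\omega_k}\sin(\omega_k s),\qquad
\dot\Phi_k(s)=e^{\beta_k s/2}\Bigl(\cos(\omega_k s)+\tfrac{\beta_k}{2\omega_k}\sin(\omega_k s)\Bigr).
\]
On $[0,T]$ the factor $e^{\beta_k s/2}$ is bounded by $e^{\beta_k T/2}\leq e^{(\ln\lambda_k+C)/2}=e^{C/2}\sqrt{\lambda_k}$. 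Hence $|\Phi_k(s)|\leq e^{C/2}\sqrt{\lambda_k}/\omega_k\eqconst 1$, giving $\E u_k^2(t)\leq C_1$ uniformly in $k\geq J$ and $t\in[0,T]$, and $|\dot\Phi_k(s)|\leq e^{C/2}\sqrt{\lambda_k}\,(1+\tfrac{\beta_k}{2\omega_k})\eqconst\sqrt{\lambda_k}$, so $\E\dot u_k^2(t)\leq C_2\lambda_k$. The finitely many indices $k<J$ contribute a finite additive constant and cause no trouble; for those one just notes $\Phi_k$ and $\dot\Phi_k$ are continuous hence bounded on the compact $[0,T]$, uniformly over the compact parameter set by continuity.

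It then follows that
\[
\E\|u(t)\|_X^2=\sum_{k\geq1}\E u_k^2(t)\,\|h_k\|_X^2\leq C_1\sum_{k\geq1}\|h_k\|_X^2<\infty
\]
by \eqref{HS-emb}, so $u(t)\in X$ for each $t$; adaptedness is inherited from the $w_k$, and the series converges in $L_2(\Omega;X)$ because its tails are controlled by the tails of $\sum\|h_k\|_X^2$. Uniqueness is immediate: any $X$-valued process with representation \eqref{sol-FSexp}--\eqref{sol-coef-eq} must have Fourier coefficients solving the scalar equations, whose solutions are unique. For the last assertion, if $\beta_k\leq C_0$ for all $k$, then the bad factor $\sqrt{\lambda_k}$ in the bound for $\dot\Phi_k$ is replaced by $e^{C_0T/2}$, a genuine constant, so $\E\dot u_k^2(t)\leq C_3$ uniformly in $k$ and $\E\|v(t)\|_X^2\leq C_3\sum_k\|h_k\|_X^2<\infty$, and $v=\dot u$ is $X$-valued by the same summation argument.

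The main obstacle is the uniformity over $\theta_1\in\Theta_1,\theta_2\in\Theta_2$ in all the estimates, and the careful bookkeeping of the exponential growth $e^{\beta_k T/2}$ against $\sqrt{\lambda_k}$: the hyperbolicity condition \eqref{eig-val-cond} is exactly calibrated so that $e^{\beta_k T/2}\lesssim\sqrt{\lambda_k}$, which is what makes $\E u_k^2(t)$ bounded rather than growing; one must also handle the borderline regime where the discriminant $\beta_k^2/4-\lambda_k$ could in principle be nonnegative for a few indices (overdamped case), but since $\beta_k=O(\ln\lambda_k)=o(\sqrt{\lambda_k})$ this can only happen for $k$ below some fixed index, which merge into the finite exceptional set. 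The compactness of $\Theta_1,\Theta_2$ and Proposition \ref{prop-ubnd} provide all the uniformity needed.
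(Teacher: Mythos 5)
Your overall strategy --- solve each scalar equation by a stochastic convolution against the fundamental solution, bound $\sup_{t\in[0,T]}\sup_k\E u_k^2(t)$, and sum using the Hilbert--Schmidt embedding \eqref{HS-emb} --- is exactly the paper's approach, and your treatment of the complex-root (underdamped) case and of the derivative process $v$ matches the paper's. However, there is a genuine gap in your dismissal of the overdamped case. You argue that the discriminant $\beta_k^2/4-\lambda_k$ can be nonnegative only for finitely many $k$ ``since $\beta_k=O(\ln\lambda_k)=o(\sqrt{\lambda_k})$.'' This conflates the one-sided hyperbolicity bound with a two-sided one: condition \eqref{eig-val-cond} only bounds $\mu_k=\rho_k+\theta_2\nu_k$ from \emph{above} by $(\ln\lambda_k+C)/T$; it places no restriction on how negative $\mu_k$ can be, and the whole point of the theorem (emphasized in the comments following its statement) is to allow unbounded dissipation. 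For the equation $u_{tt}=\boldsymbol{\Delta} u-\boldsymbol{\Delta}^2u_t+\dot{W}$ from \eqref{example-eq}, one has $\lambda_k\asymp k^{2/d}$ and $\mu_k\asymp -k^{4/d}$, so $\mu_k\leq -2\sqrt{\lambda_k}$ for \emph{all} sufficiently large $k$ and the characteristic roots are real for all large $k$; your ``finite exceptional set'' is in fact cofinite.

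The gap is fixable, and the paper fixes it: when $\mu_k\leq-2\sqrt{\lambda_k}$, write $\ell_k=\sqrt{\mu_k^2-4\lambda_k}$ and note that both roots $r_{\pm}=(\mu_k\pm\ell_k)/2$ are nonpositive (their product $\lambda_k$ is positive and their sum $\mu_k$ is negative), so
$$
\mfu_k^2(t)=t^2e^{(\mu_k+\ell_k)t}\left(\frac{1-e^{-\ell_k t}}{\ell_k t}\right)^2\leq T^2,
$$
uniformly in $k$. Without this case your uniform bound on $\E u_k^2(t)$, and hence the convergence of $\sum_k\E u_k^2(t)\|h_k\|_X^2$, is not established for a large class of admissible equations. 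A similar (and slightly more delicate) direct computation is needed for $\dot{\mfu}_k$ in the overdamped regime when verifying the claim about $v$ under $\mu_k\leq C_0$; your argument there again implicitly assumes the underdamped formula for $\dot\Phi_k$.
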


Let us make a few comments about the result.
\begin{enumerate}
\item By Lemma \ref{prop-ubnd}, we know that $\kappa_k+\theta_1\tau_k>1$ for
all sufficiently large $k$. Condition \eqref{eig-val-cond} means certain
subordination of the dissipation operator $\B_0+\theta_2\B_1$ to the
evolution operator $\A_0+\theta_1\A_1$. In particular, any dissipation
(negative $\mu_k=\rho_k+\theta_2\nu_k$) is admissible, as well as certain unbounded amplification
(positive and unbounded $\mu_k$), as long as the sequence $\{\mu_k,\ k\geq 1\}$ does not grow too fast; the critical growth rate depends on the length of the time interval. This possibility
to have an {\em unbounded dissipation operator} makes the result 
different from those considered in the literature, such as 
  \cite[Theorem 6.8.4]{Chow-Spde}.
\item The resulting generalized solution is {\em weak in the PDE sense, but is strong in the probabilistic sense}, being constructed on a given stochastic
basis;
\item
The solution is defined by its Fourier coefficients and therefore does not
depend on the choice of the space $X$. The role of $X$ is to ensure
that the  equation is  {\em  well-posed} in the sense that the output process 
(the solution $u$) takes values in  the same space as the ``input'' process $W$.
Given the special form of $W$, we are not discussing any {\em continuous 
dependence} of $u$ on $W$. 
\end{enumerate}

\begin{proof}[Proof of Theorem \protect{\ref{th:main1}}.]
 To simplify the presentation, introduce the notations 
\begin{equation}
\label{eq:lambda-mu}
\lambda_k=\kappa_k+\theta_1\tau_k,\ \ \mu_k=\rho_k+\theta_2\nu_k.
\end{equation}

For a fixed $k\geq 1$, let us consider the process $u_k$ defined by
\eqref{sol-coef-eq}. Equation \eqref{sol-coef-eq} has a unique solution, and direct computations show that
\begin{equation}
\label{coef-expl-form}
u_k(t)=\int_0^t \mfu_k(t-s)dw_k(s),
\end{equation}
where the {\tt fundamental solution} $\mfu_k$ satisfies
\begin{equation}
\label{fund-sol}
\ddot{\mfu}_k(t)-\mu_k\dot{\mfu}_k(t)+\lambda_k\mfu_k(t)=0,\ \
\mfu_k(0)=0,\ \dot{\mfu}_k(0)=1;
\end{equation}
see Appendix for details. 
Thus, $\E u_k(t)=0$ and, since the processes $u_k$ are independent for different
$k$, the series \eqref{sol-FSexp} defines an $X$-valued process if 
\begin{equation}
\label{coef-bnd}
\sup_{k\geq 1} \sup_{t\in [0,T]} \E|u_k(t)|^2 < \infty.
\end{equation}
By direct computation using \eqref{coef-expl-form} and the It\^{o} isometry,
\begin{equation}
\label{coef-sq}
\E |u_k(t)|^2 = \int_0^t \mfu_k^2(t-s)ds=\int_0^t \mfu_k^2(s)ds.
\end{equation}
The proof of the theorem is thus reduced to the study of the fundamental solution
$\mfu_k$ for sufficiently large $k$. More precisely, we will show that
\begin{equation}
\label{FS-bound}
\sup_{t\in [0,T]} \sup_k \mfu_k^2(t) < \infty,
\end{equation}
which, by \eqref{coef-sq}, implies \eqref{coef-bnd}.

The solution of equation \eqref{fund-sol} is determined by the
roots $r_{\pm}$ of the characteristic equation
\begin{equation}
\label{char-eq}
r^2-\mu_k r +\lambda_k=0: \ \ r_{\pm}=\frac{\mu_k\pm\sqrt{\mu_k^2-4\lambda_k}}{2}.
\end{equation}
By Lemma \ref{prop-ubnd}, $\lim_{k\to \infty}\lambda_k=+\infty,$ and,
 in particular, $\lambda_k>0$ for all sufficiently large $k$.
Also, condition \eqref{eig-val-cond} means that if $\mu_k>0$, then
$\mu_k\leq (\ln \lambda_k+C)/T$, and therefore 
 $\mu_k<2\sqrt{\lambda_k}$ for all sufficiently large $k$.
Accordingly, we assume that $\lambda_k>0$ and consider two cases: $|\mu_k|<2 \sqrt{\lambda_k}$ and
$\mu_k\leq - 2\sqrt{\lambda_k}$.

{ If $|\mu_k|<2\sqrt{\lambda_k}$, then} equation \eqref{char-eq} has complex
conjugate roots, and, with $\ell_k=\sqrt{\lambda_k-(\mu_k^2/4)}$,
\begin{equation}
\label{fs-compl}
\mfu_k^2(t)=t^2 e^{\mu_k t} \left(\frac{\sin(\ell_k t)}{\ell_k t}\right)^2.
\end{equation}
If $\mu_k\leq 0$, then $\mfu_k^2\leq T^2$ for all $t\in [0,T]$ and
 \eqref{FS-bound} follows.
 If $\mu_k>0$, then, for
sufficiently large $k$, condition \eqref{eig-val-cond} ensures that
$e^{\mu_k t} \leq \lambda_k e^C$ and  $\lambda_k/\ell_k^2 < 2$.
Then $\mfu_k^2\leq 2T^2e^C$ and \eqref{FS-bound} follows.

{ If $\mu_k\leq -2\sqrt{\lambda_k}$, then}  \eqref{char-eq} has real roots
(a double root if $\mu_k=-2\sqrt{\lambda_k}$), and, using the notations
$\ell_k=\sqrt{\mu_k^2-4\lambda_k}$, $a=\mu_k+\ell_k$,
\begin{equation}
\label{fs-real}
\mfu_k^2(t)=t^2e^{at}\left(\frac{1-e^{-\ell_k t}}{\ell_k t}\right)^2;
\end{equation}
the case of the double root corresponds to the limit $\ell_k\to 0$.
By assumption, $a\leq 0$, so that
 $\mfu_k^2(t) \leq T^2$ and \eqref{FS-bound} follows.

 Similarly, $v(t)=\sum_k v_k(t)h_k$,
 $v_k(t)=\int_0^t \dot{\mfu}_k(t-s)dw_k(s)$, and
 $\E v_k^2 (t)=\int_0^t |\dot{\mfu}_k(s)|^2ds$. By direct computation,
  if $\mu_k \leq C_0$, then
 $\sup_{t\in [0,T]} \sup_k |\dot{\mfu}_k(t)|^2 < \infty$, and therefore
 $v(t)\in X$.

This completes the proof of Theorem \protect{\ref{th:main1}}.
\end{proof}

\section{Estimation of Parameters}
\label{sec:Est}

Assume that the solution of equation \eqref{E:3-151} is observed so that the
 measurements of $u_k(t)$ and $v_k(t)=\dot{u}_k$ are available for all $t\in
[0,T]$ and $k=1,\ldots, N$. The objective is to estimate the
parameters $\theta_1, \theta_2$.
We keep notations \eqref{eigenvalues}, and also define
\begin{equation}
\label{eq:lambda-mu1}
\lambda_k(\theta)=\kappa_k+\theta\tau_k,\ \ \mu_k(\theta)=\rho_k+\theta\nu_k.
\end{equation}

Since
\begin{equation}
\label{E:3-22}
dv_k(t)=\big(-\lambda_k(\theta_1)u_k+\mu_k(\theta_2)v_k
\big)dt+dw_k(t)
\end{equation}
(see \eqref{sol-coef-eq}), and $u_k(t)=\int_0^t v_k(s)ds$, the vector process $\mathbf{v}=(v_1, \ldots, v_N)$ is a diffusion-type process in the sense of Liptser and
Shiryaev; see  \cite[Definition 4.2.7]{LSh1}. Therefore, by
Theorem 7.6 in \cite{LSh1} (see also Section 7.2.7 of the same reference),
the measure $\mathbf{P}^{\mathbf{v}}$ generated by the process
$\mathbf{v}$ in the space of $\mathbb{R}^N$-valued continuous
functions on $[0,T]$ is absolutely continuous with respect to the measure
$\mathbf{P}^{\mathbf{w}}$, generated in the same space by the
$N$-dimensional standard Brownian motion $\mathbf{w}=(w_1, \ldots, w_N)$.
Moreover, the density $Z=d\mathbf{P}^{\mathbf{v}}/d\mathbf{P}^{\mathbf{w}}$ has a representation \begin{equation}
\begin{split}
\label{density}
Z(\mathbf{v})&=\exp \Bigg(\sum_{k=1}^N\Big(\int_0^T
\big(-\lambda_k(\theta_1)u_k(t)+\mu_k(\theta_2)v_k(t) \big)dv_k(t)
\notag\\
&-\frac{1}{2}\int_0^T\big(-\lambda_k(\theta_1)u_k(t)+\mu_k(\theta_2)v_k(t)
\big)^2dt\Big) \Bigg).
\end{split}
\end{equation}
Define
$$
\mathfrak{z}=\ln Z(\mathbf{v}).
$$
Note that $\mathfrak{z}$ is a function of $\theta_1, \theta_2$, and the maximum
likelihood estimator of the parameters $\theta_1, \theta_2$ is computed  by
solving the system of equations
\begin{equation}
\label{mle1}
\frac{\partial \mathfrak{z}}{\partial \theta_1}=0,\ \ \
\frac{\partial \mathfrak{z}}{\partial \theta_2}=0,
\end{equation}
with unknowns $\theta_1,\theta_2$. 
This system can be written as 
 \begin{equation}
 \label{mle2}
 \begin{split}
 &F_{1,N}+L_{1,N}+K_{1,N}\theta_1+K_{12,N}\theta_2=A_{1,N}\\
 &F_{2,N}+L_{2,N}+K_{12,N}\theta_1+K_{2,N}\theta_2=A_{2,N},
 \end{split}
 \end{equation}
where 
\begin{equation}
\label{MainNotations}
\begin{split}
& A_{1,N}=-\sum_{k=1}^N\int_0^T{\tau_k u_k(t)dv_k(t)}, \quad
A_{2,N}=\sum_{k=1}^N\int_0^T{\nu_k v_k(t)dv_k(t)},\\
 &F_{1,N}=-\sum_{k=1}^N\int_0^T\kappa_k\tau_ku^2_k(t)dt, \quad
F_{2,N}=\sum_{k=1}^N\int_0^T\rho_k\nu_kv^2_k(t)dt, \\
&K_{1,N}=\sum_{k=1}^N\int_0^T\!\!\!\tau^2_ku^2_k(t)dt,\ 
K_{2,N}=\sum_{k=1}^N\int_0^T\!\!\!\nu^2_kv^2_k(t)dt,\ K_{12,N}=-\sum_{k=1}^N\int_0^T\!\!\!\nu_k\tau_ku_k(t)v_k(t)dt, \\
&L_{1,N}=-\sum_{k=1}^N\int_0^T\rho_k\tau_ku_k(t)v_k(t)dt,\quad
L_{2,N}=-\sum_{k=1}^N\int_0^T\kappa_k\nu_ku_k(t)v_k(t)dt. \\
\end{split}
\end{equation}
All the numbers $A$, $F$, $L$ and $K$ are computable from the
observations of $u_k(t)$ and $v_k(t)$, $k=1,\ldots, N$, $t\in [0,T].$

Note that
$$
K_{12,N}=-\frac{1}{2}\sum_{k=1}^N\tau_k\nu_ku^2_k(T),\quad
L_{1,N}=-\frac{1}{2}\sum_{k=1}^N\rho_k\tau_ku^2_k(T),\quad
L_{2,N}=-\frac{1}{2}\sum_{k=1}^N \kappa_k\nu_ku^2_k(T),
$$
 because, by assumption, $u_k(0)=0$ and thus
$$
\int_0^T u_kv_k(t)dt=\int_0^T u_k(t)du_k(t)=\frac{1}{2} u_k^2(T).
$$

 By the Cauchy-Schwartz inequality,
$K_{1,N}K_{2,N}-K^2_{12,N}>0$ with probability one, because
the process $u_k$ is not a scalar multiple of $v_k.$
 Therefore \eqref{mle2} has  a unique solution 
\begin{equation}
\label{E:18}
\begin{split}
\hat{\theta}_{1,N}&=
\frac{K_{2,N}\big(A_{1,N}-F_{1,N}-L_{1,N}\big)
-K_{12,N}\big(A_{2,N}-L_{2,N}-F_{2,N}\big)}{K_{1,N}K_{2,N}-K^2_{12,N}},
\\
\hat{\theta}_{2,N}&=
\frac{K_{1,N}\big(A_{2,N} -F_{2,N} -L_{2,N}\big)-K_{12,N}\big(A_{1,N}-F_{1,N}-L_{1,N}\big)}
{K_{1,N}K_{2,N}-K^2_{12,N}}.
\end{split}
\end{equation}

With notations \eqref{MainNotations} in mind, formulas \eqref{E:18} provide 
explicit expressions for the maximum likelihood estimators of
$\theta_1$ and $\theta_2$. 
To study asymptotic properties of these estimators, we need
expressions for $\hat{\theta}_{i,N}-\theta_i$, $i=1,2$:
\begin{equation}
\label{mle55}
\begin{split}
\hat{\theta}_{1,N}-\theta_1&=\frac{1}{1-D_N}\left(\frac{\iota_{1,N}}{K_{1,N}}-
\frac{\iota_{2,N}K_{12,N}}{K_{1,N}K_{2,N}}\right),\\
\hat{\theta}_{2,N}-\theta_2&=\frac{1}{1-D_N}\left(\frac{\iota_{2,N}}{K_{2,N}}-
\frac{\iota_{1,N}K_{12,N}}{K_{1,N}K_{2,N}}\right),
\end{split}
\end{equation}
 where 
\begin{equation}
\iota_{1,N}=-\sum_{k=1}^N\int_0^T\tau_ku_k(t)dw_k(t),\quad
\iota_{2,N}=\sum_{k=1}^N\int_0^T\nu_kv_k(t)dw_k(t), \quad
D_N=\frac{K_{12,N}^2}{K_{1,N}K_{2,N}}.
\end{equation}

It follows that, as $N\to \infty$, asymptotic behavior of the estimators is determined by $\iota_{i,N}/K_{i,N}$, $i=1,2$, and ${K_{12,N}}/{(K_{1,N}K_{2,N})}$.
Note that each of $\iota_{i,N}, K_{i,N}, K_{12,N}$ is a sum of independent
random variables. Moreover,
\begin{equation}
\label{iota}
\E \iota_{i,N}^2=\E K_{i,N},\ i=1,2.
\end{equation}

If $\mfu_k$ is the function satisfying
\begin{equation}
\label{fund-sol1}
\ddot{\mfu}_k(t)-\mu_k(\theta_2)\dot{\mfu}_k(t)+\lambda_k(\theta_1)
\mfu_k(t)=0,\ \
\mfu_k(0)=0,\ \dot{\mfu}_k(0)=1,
\end{equation}
then, by direct computation,  $u_k(t)=\int_0^t \mfu_k(t-s)dw_k(s)$
(see Appendix for more details), so that
$$
\E u_k^2(t)=\int_0^t |\mfu_k(s)|^2ds,\ \E v_k^2(t)=\int_0^t |\dot{\mfu}_k(s)|^2ds,
$$
and
\begin{equation}
\label{eq:rates00}
\begin{split}
&\Psi_{1,N}:=\E K_{1,N}=\sum_{k=1}^N\tau^2_k\int_0^T\int_0^t |\mfu_k(s)|^2dsdt,\\
&\Psi_{2,N}:=\E K_{2,N}=\sum_{k=1}^N\nu^2_k\int_0^T\int_0^t |\dot{\mfu}_k(s)|^2dsdt,\\
&\Psi_{12,N}:=\E K_{12,N}=-\frac{1}{2}\sum_{k=1}^N
\tau_k\nu_k\int_0^T |\mfu_k(s)|^2ds.
\end{split}
\end{equation}

The following is a necessary conditions for the consistency
of the estimators.
\begin{proposition}
\label{prop:necessary}
If  $\lim_{N\to \infty} \hat{\theta}_{1,N}=\theta_i$ in probability,
 then $\lim_{N\to \infty}\Psi_{1,N}=+\infty$. Similarly,
 if  $\lim_{N\to \infty} \hat{\theta}_{2,N}=\theta_i$ in probability,
 then $\lim_{N\to \infty}\Psi_{2,N}=+\infty$.

\end{proposition}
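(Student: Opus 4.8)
The plan is to argue by contraposition, with the classical change-of-measure argument for spectral estimators. Since $K_{1,N}=\sum_{k=1}^N\int_0^T\tau_k^2u_k^2(t)\,dt$ is nondecreasing in $N$, so is $\Psi_{1,N}=\E K_{1,N}$; hence it suffices to show that if $\sup_N\Psi_{1,N}=M<\infty$, then $\hat\theta_{1,N}$ is not consistent. (I read consistency as: $\hat\theta_{1,N}\to\theta_1$ in probability for \emph{every} true value $(\theta_1,\theta_2)\in\Theta_1\times\Theta_2$; if $\Theta_1$ is a single point there is nothing to prove, so fix $\theta_1'\in\Theta_1$, $\theta_1'\neq\theta_1$, put $\theta'=(\theta_1',\theta_2)$, $\delta=\theta_1-\theta_1'$.)

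I would \emph{not} work directly from \eqref{mle55}. Boundedness of $\Psi_{1,N}$ does control $\iota_{1,N}$ and $K_{1,N}$ — by \eqref{iota} the former is an $L^2$-bounded sum of independent mean-zero variables and the latter a bounded nondecreasing sum of independent nonnegative variables, so both converge a.s.\ — but it says nothing about the cross term $\iota_{2,N}K_{12,N}/(K_{1,N}K_{2,N})$, which could a priori cancel the leading term and keep $\hat\theta_{1,N}$ close to $\theta_1$. The change-of-measure argument circumvents this because it invokes consistency at $\theta$ and at $\theta'$ simultaneously.

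Write $\mathbf{P}_\theta$ (resp.\ $\mathbf{P}_{\theta'}$) for the distribution of the full observation $\big(u_k(\cdot),v_k(\cdot)\big)_{k\ge1}$ when the true parameter is $\theta$ (resp.\ $\theta'$); since the modes are independent, $\mathbf{P}_\theta=\bigotimes_k\mu_k$ and $\mathbf{P}_{\theta'}=\bigotimes_k\mu_k'$, where $\mu_k,\mu_k'$ are the laws of $(u_k,v_k)$ under $\theta,\theta'$. Changing $\theta_1$ to $\theta_1'$ changes only $\lambda_k(\theta_1)$, producing the drift change $-\delta\tau_ku_k\,dt$ in equation \eqref{E:3-22}; hence the same Girsanov/Liptser--Shiryaev computation that gave the density \eqref{density} yields $\mu_k'\sim\mu_k$ with
\[
\ln\frac{d\mu_k'}{d\mu_k}=-\delta\,\iota_1^{(k)}-\tfrac12\delta^2K_1^{(k)},\qquad \iota_1^{(k)}=-\int_0^T\tau_ku_k\,dw_k,\quad K_1^{(k)}=\int_0^T\tau_k^2u_k^2\,dt ,
\]
so that $\sum_{k\le N}K_1^{(k)}=K_{1,N}$ and $\sum_{k\ge1}\E_\theta K_1^{(k)}=\lim_N\Psi_{1,N}=M$. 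Because $\E_\theta\iota_1^{(k)}=0$, Jensen's inequality applied to the Hellinger affinity $\rho_k:=\E_\theta\big[(d\mu_k'/d\mu_k)^{1/2}\big]$ gives $\ln\rho_k\ge\tfrac12\E_\theta[\ln(d\mu_k'/d\mu_k)]=-\tfrac14\delta^2\E_\theta K_1^{(k)}$, hence $1-\rho_k\le\tfrac14\delta^2\E_\theta K_1^{(k)}$ and $\sum_k(1-\rho_k)\le\tfrac14\delta^2M<\infty$. By Kakutani's dichotomy theorem for product measures, this forces $\mathbf{P}_\theta\sim\mathbf{P}_{\theta'}$.

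Now $\hat\theta_{1,N}$ is a measurable functional of the observation. Consistency gives $\hat\theta_{1,N}\to\theta_1$ in $\mathbf{P}_\theta$-probability and $\hat\theta_{1,N}\to\theta_1'$ in $\mathbf{P}_{\theta'}$-probability; picking a subsequence along which the first convergence is $\mathbf{P}_\theta$-a.s.\ and transporting this a.s.\ event through the equivalence $\mathbf{P}_\theta\sim\mathbf{P}_{\theta'}$, we get $\hat\theta_{1,N}\to\theta_1$ also $\mathbf{P}_{\theta'}$-a.s.\ along that subsequence, contradicting $\theta_1\neq\theta_1'$. Hence $\lim_N\Psi_{1,N}=+\infty$. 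The proof for $\theta_2$ is identical after replacing $(\tau_k,u_k,\theta_1,\Theta_1)$ by $(\nu_k,v_k,\theta_2,\Theta_2)$ and using $\E_\theta K_{2,N}=\Psi_{2,N}$ from \eqref{eq:rates00}. The substantive step is the Kakutani estimate $\sum_k(1-\rho_k)<\infty$; here it reduces — via Jensen and the explicit density — precisely to the assumed boundedness of $\Psi_{1,N}$, so no fine analysis of the fundamental solutions $\mfu_k$ is needed. The two points that still require care are the independence/product structure (so that Kakutani applies) and the passage from convergence in probability to a.s.\ convergence along a subsequence before carrying the limit across the equivalent measures.
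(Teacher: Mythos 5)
Your proof is correct, but it follows a genuinely different route from the paper's. The paper argues directly from \eqref{mle55}: if $\Psi_{1,N}$ stays bounded, then $K_{1,N}$ (a bounded, nondecreasing sum of independent nonnegative variables) and $\iota_{1,N}$ (an $L^2$-bounded sum of independent centered variables) both converge with probability one, so $\iota_{1,N}/K_{1,N}$ tends to a non-degenerate random variable and $\hat{\theta}_{1,N}$ cannot converge to the constant $\theta_1$. You replace this with the classical change-of-measure argument (essentially the one used for parabolic equations in \cite{HubR}): the per-mode Hellinger affinities satisfy $\sum_k(1-\rho_k)\le\tfrac{1}{4}\delta^2\sup_N\Psi_{1,N}<\infty$, Kakutani's dichotomy gives $\mathbf{P}_{\theta}\sim\mathbf{P}_{\theta'}$, and consistency under two equivalent measures with distinct limits is impossible. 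The trade-offs are real. Your approach is more robust on one point: it disposes of the cross term $\iota_{2,N}K_{12,N}/(K_{1,N}K_{2,N})$ in \eqref{mle55}, which the paper's one-line conclusion passes over in silence (a priori that term is $O_P(1)$ when $K_{1,N}$ is bounded and could conceivably cancel the non-degenerate leading term), and it requires no control of $\Psi_{2,N}$ or $K_{12,N}$ whatsoever. The price is a strictly stronger hypothesis: you must invoke convergence of $\hat{\theta}_{1,N}$ under two distinct true values $\theta_1\ne\theta_1'$, i.e.\ you read the hypothesis as consistency over all of $\Theta_1$, whereas the proposition as stated (and the paper's argument) fixes a single true value; your argument is also vacuous when $\Theta_1$ is a singleton, a case the literal statement still covers. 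Both readings are standard in this literature, but you should state explicitly that you are proving the ``identifiability'' version of the contrapositive rather than the pointwise one. The technical steps themselves --- the Girsanov density per mode, the Jensen bound $1-\rho_k\le\tfrac{1}{4}\delta^2\E_{\theta}K_1^{(k)}$, the product structure justifying Kakutani, and the subsequence extraction before transporting the a.s.\ event across equivalent measures --- are all handled correctly.
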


\begin{proof} Each of the sequences $\{\Psi_{i,N},\ N\geq 1\}$ is
monotonically increasing and thus has a limit, finite or infinite.
If $\lim_{N\to \infty}\Psi_{i,N}<\infty$, then
$\lim_{N\to \infty} \iota_{i,N}/K_{i,N}$ exists with probability one and is
a non-degenerate random variable. Equalities  \eqref{mle55} then
implies that $\hat{\theta}_{i,N}$ cannot converge to $\theta_i$.
\end{proof}

Under the assumptions of Theorem \ref{th:main1}, we derived a bound
$|\mfu_k(t)|^2\leq const. \cdot T^2$, which was enough to establish
existence and uniqueness of solution of \eqref{E:3-151}. To study estimators
$\hat{\theta}_{i,N}$, and, in particular, convergence/divergece of the
sequences $\{\Psi_{i,N},\ N\geq 1\}$, we need more delicate bounds on both
$|\mfu_k(t)|^2$ and $|\dot{\mfu}_k(t)|^2$. The computations,
while relatively straightforward, are rather long and
lead to the following relations (see \eqref{not-asymp1} for the 
definition of $\sim$):
\begin{align}
\label{main-asympt-u0}
&\E u_k^2(T)\sim
\frac{e^{\mu_k(\theta_2)T}-1}{2\mu_k(\theta_2)\lambda_k(\theta_1)}, \quad
 \Var u_k^2(T)\sim 3 \left(\frac{e^{\mu_k(\theta_2)T}-1}{2\mu_k(\theta_2)\lambda_k(\theta_1)}
 \right)^2;\\
\label{main-asympt-u}
&\E \int_0^T u_k^2(t)dt \sim \frac{T^2M\big(T\mu_k(\theta_2)\big)}{\lambda_k(\theta_1)},\quad
\Var \int_0^T u_k^2(t)dt \sim \frac{T^4V\big(T\mu_k(\theta_2)\big)}{\lambda_k^2(\theta_1)},\\
\label{main-asympt-v}
&\E \int_0^T v_k^2(t)dt \sim T^2M\big(T\mu_k(\theta_2)\big),\quad
\Var \int_0^T v_k^2(t)dt \sim T^4V\big(T\mu_k(\theta_2)\big),
\end{align}
where
\begin{align}
\label{aux-M}
&M(x)=
\begin{cases}
\ds \frac{e^{x}-x-1}{2x^2},&{\ \rm if\ } x\not= 0,\\
\ds \frac{1}{4},& {\ \rm if}\   x=0;
\end{cases}\\
\label{aux-V}
&V(x)=
\begin{cases}
\ds\frac{e^{2x}+4e^{x}-4xe^{x}-2x-5}{4x^4},&{\ \rm if\ } x\not= 0,\\
\ds\frac{1}{24},& {\ \rm if}\   x=0.
\end{cases}
\end{align}
Note that the functions
$M$ and $V$ are continuous and positive on $\R$, and
\begin{equation}
\label{MV-asympt}
M(x)\sim
\begin{cases}
\ds (2|x|)^{-1},&  x\to -\infty,\\
\ds 2(2x)^{-2}\ e^x,& x\to +\infty;
\end{cases}
\  \ V(x)\sim
\begin{cases}
\ds 4(2|x|)^{-3},&  x\to -\infty,\\
\ds 4(2x)^{-4}\ e^{2x},& x\to +\infty.
\end{cases}
\end{equation}

 The computations  leading to \eqref{main-asympt-u0}--\eqref{main-asympt-v}
  rely on the fact that
 $u_k$ and $v_k$ are Gaussian processes, so that, for example,
 $$
 \Var  \int_0^T u_k^2(t)dt = 4\int_0^T \int_0^t \Big( \E \big(u(t) u(s) \big) \Big)^2 dsdt.
 $$

 It follows from \eqref{main-asympt-u} and \eqref{main-asympt-v} that if $\lim_{N\to \infty} \Psi_{i,N}=+ \infty$, then
\begin{equation}
\label{eq:rates1}
\Psi_{1,N}\sim T^2\sum_{k=1}^N
\frac{\tau_k^2\,M\big(T\mu_k(\theta_2)\big)}{\lambda_k(\theta_1)},\
\Psi_{2,N}\sim T^2\sum_{k=1}^N
\nu_k^2\,M\big(T\mu_k(\theta_2)\big).
\end{equation}
Relations \eqref{eq:rates1} show that
conditions for consistency and asymptotic normality
of the estimators require
additional assumptions on the asymptotical behavior of the
eigenvalues of the operators
$\A_i$, $\B_i$.

The asymptotic behavior of the eigenvalues of an operator is well-known
when the operator is elliptic and self-adjoint. For example,
let $\mathcal{D}$ be an operator defined on smooth functions by
$$
\mathcal{D}f(x)=-\sum_{i,j=1}^d \frac{\partial }{\partial x_i}\left(a_{ij}(x)
\frac{\partial f(x)}{\partial x_j}\right),
$$
in a smooth bounded domain $G\subset \R^d$, with zero Dirichlet boundary
conditions. Assume that the functions $a_{ij}$ are all infinitely differentiable
in $G$ and are bounded with all the derivatives, and the matrix
$(a_{ij}(x), \ i,j=1,\ldots, d)$ is symmetric and uniformly positive-definite for all
$x\in G$. Then the eigenvalues $d_k$ of $\mathcal{D}$ 
can be enumerated so that
\begin{equation}
\label{asympt-eig00}
d_k\asymp k^{2/d}
\end{equation}
in the sense of notation \eqref{not-asymp1}.
 More generally, for a positive-definite
 elliptic self-adjoint differential or pseudo-differential operator $\mathcal{D}$
 of order $m$ on a
 smooth bounded domain in $\R^d$ with suitable boundary conditions or on a
 smooth compact $d$-dimensional manifold, the asymptotic of the eigenvalues
 $d_k,\ k\geq 1$, is
 \begin{equation}
 \label{asympt-eig01}
d_k\asymp k^{m/d};
\end{equation}
note that $m$ can be an arbitrary positive number. This result is well-known;
see, for example,  Safarov and Vassiliev \cite[Section 1.2]{SafVas}.
 An  example of $\mathcal{D}$ is $(1-\boldsymbol{\Delta})^{m/2}$, $m>0$, where $\boldsymbol{\Delta}$ is the Laplace operator; note also that, for this operator, relation \eqref{asympt-eig01} holds even when $m\leq 0$.

In our setting, when the operators are {\em defined} by their eigenvalues and
eigenfunctions, more exotic eigenvalues are possible, for example,
$\tau_k=e^k$ or $\nu_k=(-1)^k/k$. On the other hand, it is clear that the
analysis of the estimators should be easier when all the eigenvalues in the
equation are of the type \eqref{asympt-eig01}. Accordingly, we make the
following
\begin{definition}
\label{def:algebr}
Equation \eqref{E:3-151} is called {\tt algebraically hyperbolic}
if it is diagonalizable, hyperbolic, and the eigenvalues
$\lambda_k(\theta)=\kappa_k+\theta\tau_k$, $\mu_k(\theta)=\rho_k+\theta\nu_k$
have the following properties:
\begin{enumerate}
\item There exist real
numbers $\alpha, \alpha_1$ such that, for all $\theta\in \Theta_1$,
\begin{equation}
\label{eq:alg-cond1}
\lambda_k(\theta) \asymp k^{\alpha},\ \ |\tau_k|\asymp k^{\alpha_1};
\end{equation}

\item Either $|\mu_k(\theta)|\leq C$ for all $\theta\in \Theta_2$ or
there exist  numbers $\beta>0,\ \beta_1\in \R$ such that,
for all $\theta\in \Theta_2$,
\begin{equation}
\label{eq:alg-cond2}
-\mu_k(\theta) \asymp k^{\beta},\ |\nu_k|\asymp k^{\beta_1}.
\end{equation}
\end{enumerate}
\end{definition}
To emphasize the importance of the numbers $\alpha$ and $\beta$,
we will sometimes say that the equation is $(\alpha,\beta)$-algebraically
hyperbolic; $\beta=0$ includes the case of uniformly bounded $\mu_k(\theta)$.

The reader can easily verify that
\begin{itemize}
\item under hyperbolicity assumption, $\alpha>0$ and no unbounded
amplification is possible;
\item each of the equations in \eqref{example-eq} is algebraically hyperbolic.
\end{itemize}

\section{Analysis of Estimators: Algebraic Case}
\label{sec:AC}

\begin{theorem}
\label{th:main-alg}
Assume that
equation \eqref{E:3-151} is $(\alpha, \beta)$-algebraically
hyperbolic in the sense of Definition \ref{def:algebr}.
\begin{enumerate}
\item If
\begin{equation}
\label{alg-order-cond1}
\alpha_1\geq \frac{\alpha+\beta-1}{2},
\end{equation}
then the estimator $\hat{\theta}_{1,N}$ is strongly consistent and
asymptotically normal with rate $\sqrt{\Psi_{1,N}}$ as $N\to \infty$:
\begin{align}
\label{alg-res-asymp1}
\lim_{N\to \infty}\hat{\theta}_{1,N}=\theta_1 \ \ {\rm with\ probability \ one}; \\
\label{alg-res-asymp2}
\lim_{N\to \infty}\sqrt{\Psi_{1,N}}\Big(\hat{\theta}_{1,N}-\theta_1 \Big)
= \xi_1 \ {\rm in \ distribution,}\
\end{align}
where $\xi_1$ is a standard Gaussian random variable.
\item If
\begin{equation}
\label{alg-order-cond2}
\beta_1\geq \frac{\beta-1}{2},
\end{equation}
then the estimator $\hat{\theta}_{2,N}$ is strongly consistent and
asymptotically normal with rate $\sqrt{\Psi_{2,N}}$ as $N\to \infty$:
\begin{align}
\label{alg-res-asymp3}
\lim_{N\to \infty}\hat{\theta}_{2,N}=\theta_2 \ \ {\rm with\ probability \ one}; \\
\label{alg-res-asymp4}
\lim_{N\to \infty}\sqrt{\Psi_{2,N}}\Big(\hat{\theta}_{2,N}-\theta_2 \Big)
= \xi_2 \ {\rm in \ distribution,}\
\end{align}
where $\xi_2$ is a standard Gaussian random variable.
\item If both \eqref{alg-order-cond1} and \eqref{alg-order-cond2} hold,
then the random variables $\xi_1, \xi_2$ are independent.
\end{enumerate}
\end{theorem}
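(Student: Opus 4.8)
The plan is to reduce everything to the three building blocks identified after \eqref{mle55}: the normalized score terms $\iota_{i,N}/K_{i,N}$ and the cross term $D_N = K_{12,N}^2/(K_{1,N}K_{2,N})$. I would first show that under \eqref{alg-order-cond1} (resp.\ \eqref{alg-order-cond2}) one has $\lim_{N\to\infty}\Psi_{i,N}=+\infty$ with a polynomial (or polynomial-times-exponential, when $\beta>0$) growth rate obtained by inserting the eigenvalue asymptotics \eqref{eq:alg-cond1}, \eqref{eq:alg-cond2} and the tail behavior \eqref{MV-asympt} of $M$ into \eqref{eq:rates1}: when $\beta=0$ the summand in $\Psi_{1,N}$ is $\asymp k^{2\alpha_1-\alpha}$, which is non-summable exactly when $2\alpha_1-\alpha\geq -1$, i.e.\ \eqref{alg-order-cond1}; when $\beta>0$ the exponential factor $e^{T\mu_k(\theta_2)}\to 0$ so the partial sums are dominated by (a constant times) the last term, and divergence is automatic because $\Psi_{i,N}$ is monotone and the individual terms, while decaying, keep the sum increasing — here I would be slightly careful and note that $M(T\mu_k)>0$ always, so $\Psi_{i,N}$ is strictly increasing, but divergence to $+\infty$ when $\beta>0$ needs the precise statement that even with exponentially small terms the sum still diverges; in fact with $\mu_k\to-\infty$ polynomially the terms are eventually summable, so the correct reading is that $\beta>0$ forces $|\mu_k|\to\infty$ and the relevant condition is genuinely the one with $\beta$ appearing, and one must check that \eqref{alg-order-cond1} then still characterizes divergence. (This subtlety — whether $\Psi_{i,N}\to\infty$ under the stated order conditions when $\beta>0$ — is the point to handle with care; the intended reading, consistent with Theorem~\ref{th:inter-main}, is that the conditions are stated so that divergence holds, and I would verify it term by term.)

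Granting $\Psi_{i,N}\to\infty$, the next step is a strong law of large numbers for the martingale-like normalizations. Each of $\iota_{1,N}$, $\iota_{2,N}$ is a sum of independent mean-zero random variables (stochastic integrals $\int_0^T\tau_k u_k\,dw_k$, $\int_0^T\nu_k v_k\,dw_k$), and $K_{1,N}$, $K_{2,N}$ are sums of independent positive random variables with $\E\iota_{i,N}^2=\E K_{i,N}=\Psi_{i,N}$ by \eqref{iota}. I would apply the Kolmogorov strong law: using the variance estimates \eqref{main-asympt-u}, \eqref{main-asympt-v} for the $K$-summands and a fourth-moment (Gaussian) bound for the $\iota$-summands, check that $\sum_k \Var(\text{$k$-th term})/\Psi_{k}^2<\infty$ along the subsequence of indices — or more cleanly, use the Toeplitz/Kronecker lemma — to conclude $K_{i,N}/\Psi_{i,N}\to 1$ a.s.\ and $\iota_{i,N}/\Psi_{i,N}\to 0$ a.s. Hence $\iota_{i,N}/K_{i,N}\to 0$ a.s. For the cross term, $\Psi_{12,N}$ from \eqref{eq:rates00} satisfies, by Cauchy–Schwarz applied summand-by-summand together with the asymptotics \eqref{main-asympt-u0}–\eqref{main-asympt-v}, the bound $\Psi_{12,N}^2 = o(\Psi_{1,N}\Psi_{2,N})$ whenever at least one order condition is strict, and is $\asymp \Psi_{1,N}\Psi_{2,N}$ only in degenerate borderline cases; combined with the a.s.\ convergence $K_{i,N}/\Psi_{i,N}\to 1$ this gives $\limsup_N D_N<1$ a.s. Then \eqref{mle55} yields $\hat\theta_{i,N}-\theta_i\to 0$ a.s., proving \eqref{alg-res-asymp1} and \eqref{alg-res-asymp3}.

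For asymptotic normality \eqref{alg-res-asymp2}, \eqref{alg-res-asymp4}, I would write, from \eqref{mle55},
\[
\sqrt{\Psi_{1,N}}\,(\hat\theta_{1,N}-\theta_1)
=\frac{1}{1-D_N}\left(\frac{\Psi_{1,N}}{K_{1,N}}\cdot\frac{\iota_{1,N}}{\sqrt{\Psi_{1,N}}}
-\frac{\Psi_{1,N}}{K_{1,N}K_{2,N}}\,K_{12,N}\cdot\frac{\iota_{2,N}}{\sqrt{\Psi_{1,N}}}\right),
\]
and similarly for the second component. By the a.s.\ convergences already established, $\Psi_{1,N}/K_{1,N}\to 1$, $D_N\to 0$ (in the non-degenerate regime), and the cross-contribution is negligible because $\Psi_{12,N}/\Psi_{1,N}\to 0$ forces $K_{12,N}\iota_{2,N}/(K_{2,N}\sqrt{\Psi_{1,N}})\to 0$ in probability. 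So by Slutsky it suffices to prove a CLT for $\iota_{1,N}/\sqrt{\Psi_{1,N}}$ (and $\iota_{2,N}/\sqrt{\Psi_{2,N}}$). Since $\iota_{1,N}=\sum_{k=1}^N\eta_k$ with $\eta_k=-\int_0^T\tau_k u_k\,dw_k$ independent, mean zero, $\sum_{k=1}^N\E\eta_k^2=\Psi_{1,N}\to\infty$, I would verify the Lindeberg condition: using the Gaussianity of $u_k$ and the fact that each individual variance $\E\eta_k^2\asymp \tau_k^2 T^2 M(T\mu_k)/\lambda_k$ is $o(\Psi_{1,N})$ (it is, since $\Psi_{1,N}\to\infty$ and, even in the $\beta>0$ case, either the terms tend to zero or grow subexponentially relative to the whole sum), Lindeberg's condition follows from a uniform integrability / fourth-moment argument ($\E\eta_k^4\leq 3(\E\eta_k^2)^2$ up to constants via Gaussian hypercontractivity). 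This gives \eqref{alg-res-asymp2} and \eqref{alg-res-asymp4}. Finally, for joint convergence and independence of $\xi_1,\xi_2$ in part~(3), I would show that $\mathrm{Cov}(\iota_{1,N},\iota_{2,N})/\sqrt{\Psi_{1,N}\Psi_{2,N}}\to 0$: this covariance equals $-\sum_{k=1}^N\tau_k\nu_k\int_0^T\E(u_k(t)v_k(t))\,dt$ up to the martingale bracket, which by Cauchy–Schwarz is bounded by $\Psi_{12,N}$ in absolute value (indeed it equals a quantity of the same order as $\Psi_{12,N}$), and we already have $\Psi_{12,N}=o(\sqrt{\Psi_{1,N}\Psi_{2,N}})$ under both order conditions; then the bivariate Lindeberg–Feller CLT applied to the $\mathbb{R}^2$-valued sum $(\eta_k,\zeta_k)$ yields a limit with diagonal covariance, i.e.\ independent standard Gaussians.

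\textbf{Main obstacle.} The delicate point is the bookkeeping on growth rates when $\beta>0$: one must confirm that the stated order conditions \eqref{alg-order-cond1}, \eqref{alg-order-cond2} really do force $\Psi_{i,N}\to\infty$ and, simultaneously, $D_N$ bounded away from $1$ and the cross-covariance negligible, since in the amplification-free but unbounded-dissipation regime the summands carry the exponentially small factor $e^{T\mu_k(\theta_2)}$ and naive term-by-term comparison can mislead. Establishing the precise asymptotics of $\Psi_{1,N},\Psi_{2,N},\Psi_{12,N}$ (via \eqref{MV-asympt} and summation of $k^{\gamma}e^{-cT k^{\beta}}$-type series) and checking the Lindeberg condition uniformly are where the real work lies; everything else is Slutsky plus a standard martingale SLLN.
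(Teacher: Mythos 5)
Your overall architecture is the same as the paper's: reduce to $\iota_{i,N}/K_{i,N}$ and $D_N$ via \eqref{mle55}, prove $K_{i,N}/\Psi_{i,N}\to 1$ a.s.\ by Kolmogorov's strong law, $\iota_{i,N}/\Psi_{i,N}\to 0$ a.s., a CLT for $\iota_{i,N}/\sqrt{\Psi_{i,N}}$, negligibility of the cross terms, and Slutsky. (The paper runs the CLT through a martingale central limit theorem, Corollary \ref{C:3}, whose hypothesis is exactly the already-established convergence $K_{i,N}/\Psi_{i,N}\to 1$; your Lindeberg--Feller route with fourth-moment bounds is workable but duplicates effort, since the summands $\int_0^T\tau_k u_k\,dw_k$ are not Gaussian and the bracket-based argument is the cleaner one.)

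However, there is a genuine gap precisely at the point you flag as the ``main obstacle,'' and the source of the difficulty is a misreading of \eqref{MV-asympt}. You assert that when $\beta>0$ the summands of $\Psi_{1,N}$ ``carry the exponentially small factor $e^{T\mu_k(\theta_2)}$'' and that one must sum series of the type $k^{\gamma}e^{-cTk^{\beta}}$. This is not so: as $x\to-\infty$, $M(x)\sim(2|x|)^{-1}$ --- the exponential term inside $M$ vanishes, but $M$ itself decays only \emph{polynomially}. Hence for $\beta>0$ one has $M\big(T\mu_k(\theta_2)\big)\asymp k^{-\beta}$ and $V\big(T\mu_k(\theta_2)\big)\asymp k^{-3\beta}$, so the summand of $\Psi_{1,N}$ is $\eqconst k^{\gamma_1}$ with $\gamma_1=2\alpha_1-\alpha-\beta$, and condition \eqref{alg-order-cond1} is exactly $\gamma_1\geq-1$, i.e.\ non-summability and $\Psi_{1,N}\eqconst N^{\gamma_1+1}$ (or $\ln N$ at the boundary). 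This one line is what makes the $\beta$ appear in \eqref{alg-order-cond1} and is also what feeds the Kolmogorov SLLN: the variance summand is $\eqconst k^{2\gamma_1-\beta}$, so $\sum_n n^{2\gamma_1-\beta}/\Psi_{1,n}^2<\infty$ whenever $\gamma_1\geq -1$. Your text never commits to the exponent $\gamma_1$, vacillates between ``divergence is automatic'' and ``the terms are eventually summable'' (neither is correct as stated: under \eqref{alg-order-cond1} the terms are $\gtrsim k^{-1}$ and the sum diverges, but not automatically --- it diverges because of the order condition), and leaves the verification open. The same correction also settles your worries about $D_N$ and the cross-covariance: with $\gamma_{12}=\alpha_1-\alpha+\beta_1-\beta$ one checks $\gamma_{12}=(\gamma_1+\gamma_2)/2-\alpha/2$ with $\alpha>0$, so $\Psi_{12,N}^2=o(\Psi_{1,N}\Psi_{2,N})$ without any ``degenerate borderline'' exceptions, giving $D_N\to 0$ a.s.\ (not merely $\limsup D_N<1$) and the asymptotic independence in part (3).
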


\begin{remark}
(a)  In terms of the orders of the operators $($see \eqref{asympt-eig01}$)$,
  condition \eqref{alg-order-cond1} becomes
 \begin{equation}
 \label{alg-order-cond1-d}
 \mathrm{order}(\A_1)\geq \frac{\mathrm{order}(\A_0+\theta_1\A_1)
 +\mathrm{order}(\B_0+\theta_2\B_1)-d}{2},
\end{equation}
and condition \eqref{alg-order-cond2} becomes
\begin{equation}
 \label{alg-order-cond2-d}
 \mathrm{order}(\B_1)\geq \frac{\mathrm{order}(\B_0+\theta_2\B_1)-d}{2}.
\end{equation}
(b) The condition for consistency of $\hat{\theta}_{2,N}$
does not depend on the evolution operator and is similar to the
consistency condition in the parabolic case  \cite[Theorem 2.1]{HubR}.
\end{remark}

The intuition behind conditions \eqref{alg-order-cond1-d} and
\eqref{alg-order-cond2-d} is as follows. The information about the
numbers $\theta_1, \theta_2$ is carried by the terms
$\A_1u$ and $\B_1 u_t$, respectively, and these terms must be
irregular enough to be distinguishable in the noise $\dot{W}$ {\em during
 a finite observation window $[0,T]$}.
The higher the orders of the operators, the more irregular the
terms, the easier the estimation.

\begin{proof}[Proof of Theorem \protect{\ref{th:main-alg}}]
Note that if $\beta>0$, then $\lim_{k\to \infty}\mu_k(\theta)=-\infty$, and therefore, by \eqref{MV-asympt},
\begin{equation}
\label{MV-alg-asymp}
M_k\big(T\mu_k(\theta)\big)\sim\frac{1}{2T|\mu_k(\theta)|}
\asymp k^{-\beta},\
V_k\big(T\mu_k(\theta)\big)\sim\frac{1}{2|T\mu_k(\theta)|^3}
\asymp k^{-3\beta}.
\end{equation}
 Let
$$
\gamma_1=2\alpha_1-\alpha-\beta,\ \ \gamma_2=2\beta_1-\beta,
\ \gamma_{12}=\alpha_1-\alpha+\beta_1-\beta.
$$
We have (see \eqref{eqconst} for the definition of $\eqconst$)
\begin{align}
\label{alg-pr-u-asym}
&\tau_k^2\E\int_0^T u_k^2(t)dt \eqconst k^{\gamma_1},\
\tau_k^4\Var \int_0^T u_k^2(t)dt \eqconst k^{2\gamma_1-\beta},\\
\label{alg-pr-v-asym}
&\nu_k^2\E \int_0^T v_k^2(t)dt \eqconst k^{\gamma_2},\
\nu_k^4\Var \int_0^T v_k^2(t)dt \eqconst k^{2\gamma_2-\beta},\\
\label{alg-pr-uv-asym}
&|\nu_k\tau_k|\,\E u_k^2(T) \eqconst k^{\gamma_{12}},\
\nu_k^2\tau_k^2\Var u_k^2(T) \eqconst k^{2\gamma_{12}},
\end{align}
and therefore
\begin{equation}
\label{alg-pr-Psi-asym}
\Psi_{1,N} \eqconst \begin{cases}
\mathrm{const.},& {\rm if}\ \gamma_1<-1,\\
\ln N,& {\rm if\ } \gamma_1=-1,\\
N^{\gamma_1+1},& {\rm if\ } \gamma_1>-1,
\end{cases}
\qquad
\Psi_{2,N} \eqconst \begin{cases}
\mathrm{const.},& {\rm if}\ \gamma_2<-1,\\
\ln N,& {\rm if\ } \gamma_2=-1,\\
N^{\gamma_2+1},& {\rm if\ } \gamma_2>-1,
\end{cases}
\end{equation}
\begin{equation}
\label{alg-pr-Psi12-asym}
|\Psi_{12,N}| \eqconst \begin{cases}
\mathrm{const.},& {\rm if}\ \gamma_{12}<-1,\\
\ln N,& {\rm if\ } \gamma_{12}=-1,\\
N^{\gamma_{12}+1},& {\rm if\ } \gamma_{12}>-1.
\end{cases}
\end{equation}
Next, we show that condition \eqref{alg-order-cond1} implies
\begin{equation}
\label{pr-alg1-2}
\lim_{N\to \infty} \frac{K_{1,N}}{\Psi_{1,N}}=1 \ {\rm with \ probability \ one},
\end{equation}
condition \eqref{alg-order-cond2} implies
\begin{equation}
\label{pr-alg1-4}
\lim_{N\to \infty} \frac{K_{2,N}}{\Psi_{2,N}}=1 \ {\rm with \ probability \ one},
\end{equation}
and either \eqref{alg-order-cond1} or \eqref{alg-order-cond2},
\begin{equation}
\label{pr-alg1-3}
\lim_{N\to \infty}D_N=0 \ {\rm with \ probability \ one.}
 \end{equation}
 Indeed, convergence \eqref{pr-alg1-2} follows from \eqref{main-asympt-u} and
\eqref{ap2}, because \eqref{alg-order-cond1} implies
$$
\sum_n \frac{n^{2\gamma_1-\beta}}{\Psi_{n,1}^2} < \infty.
$$
Similarly,  \eqref{pr-alg1-4} follows from \eqref{main-asympt-v} and
\eqref{ap2}, because \eqref{alg-order-cond2} implies
$$
\sum_n \frac{n^{2\gamma_2}}{\Psi_{n,2}^2} < \infty.
$$
For \eqref{pr-alg1-3}, we first observe that $\lim_{N\to \infty} K_{12,N}/\Psi_{12,N}$ exists with probability one. If $\gamma_{12}<-1$,
 the the limit is  a $\mathbb{P}$-a.s finite random variable. If  $\gamma_{12}\geq -1$, the \eqref{main-asympt-u0} and \eqref{ap2} imply that  limit is $1$.
  Then direct analysis shows that
$$
\lim_{N\to \infty} \frac{\Psi_{12,N}^2}{\Psi_{1,N}\Psi_{2,N}}=0
$$
if at least one of $\Psi_{1,N}$, $\Psi_{2,N}$ is unbounded.

Next, we show that \eqref{alg-order-cond1} implies
\begin{equation}
\label{pr-alg1-1}
\lim_{N\to \infty} \frac{\iota_{1,N}}{\Psi_{1,N}}=0\ {\rm with \ probability \ one},
\end{equation}
 and
\begin{equation}
\label{pr-alg2}
\lim_{N\to \infty} \frac{\iota_{1,N}}{\sqrt{\Psi_{1,N}}}=\xi_1 {\rm \ in \ distribution,}
\end{equation}
 whereas \eqref{alg-order-cond2} implies
\begin{equation}
\label{pr-alg2-1}
\lim_{N\to \infty} \frac{\iota_{2,N}}{\Psi_{2,N}}=0 \ {\rm with \ probability \ one},
\end{equation}
 and
\begin{equation}
\label{pr-alg2-2}
\lim_{N\to \infty} \frac{\iota_{2,N}}{\sqrt{\Psi_{2,N}}}=\xi_2
{\rm \ in \ distribution.}
\end{equation}
Indeed, \eqref{pr-alg1-1} follows from \eqref{main-asympt-u} and
\eqref{ap1}, because
\eqref{alg-order-cond1} implies that $\sum_{k} k^{\gamma_1}=+\infty$.
Similarly,  \eqref{pr-alg1-2} follows from
\eqref{main-asympt-v} and \eqref{ap1}.

Both \eqref{pr-alg2} and \eqref{pr-alg2-2} follow from Corollary \ref{C:3}.
Together with \eqref{pr-alg1-3}, the same Corollary also implies
independence of $\xi_1$ and $\xi_2$ if both \eqref{alg-order-cond1} and
\eqref{alg-order-cond2} hold.

To complete the proof of the theorem it remains  to show that
\begin{equation}
\label{pr-alg5-1}
\lim_{N\to \infty}\frac{\iota_{i,N} K_{12,N}}{K_{1,N}K_{2,N}}=0,\ i=1,2,
{\rm \ with \ probability \ one},
\end{equation}
 and
\begin{equation}
\label{pr-alg5-2}
\lim_{N\to \infty} \frac{\sqrt{\Psi_{i,N}}\,\iota_{i,N}K_{12,N}}{K_{1,N}K_{2,N}}=0, \  i=1,2,
{\rm \ in \ probability.}
\end{equation}
We leave to the interested reader to verify that \eqref{ap1} implies \eqref{pr-alg5-1}, and \eqref{pr-alg1-3}, \eqref{pr-alg2},
\eqref{pr-alg2-2} imply \eqref{pr-alg5-2}.

This completes the proof of Theorem \ref{th:main-alg}.
\end{proof}

\begin{remark} From Proposition \ref{prop:necessary}, we
see that condition \eqref{alg-order-cond1}
is  both necessary and sufficient for consistency and
asymptotic normality of estimator $\hat{\theta}_{1,N}$. Similarly,
condition \eqref{alg-order-cond2} is necessary and sufficient for consistency and
asymptotic normality of estimator $\hat{\theta}_{2,N}$.
\end{remark}

Since in the algebraic case the sum $\sum_{k=1}^N k^{\gamma}$
appears frequently, we introduce a special notation to describe the
asymptotic of this sum as $N\to \infty$ for $\gamma\geq -1$:
\begin{equation}
\label{sum-asymp}
\Upsilon_N(\gamma)=
\begin{cases}
N^{\gamma+1},& {\rm if} \ \gamma>-1,\\
\ln N,& {\rm if} \ \gamma=-1.
\end{cases}
\end{equation}
With this notation, 
$\sum_{k=1}^N k^{\gamma}
 \asymp \Upsilon_N(\gamma)$, $\gamma\geq -1$.

Let us consider several examples, in which $\boldsymbol{\Delta}$ is
the Laplace operator in a smooth bounded domain $G$ in $\R^d$ with zero
boundary conditions; $H=L_2(G)$. We start with these three equations:
\begin{align}
\label{alg-ex1}
&u_{tt}=\theta_1\boldsymbol{\Delta} u + \theta_2u_t + \dot{W}, \
\theta_1>0,\ \theta_2\in \R; \\
\label{alg-ex2}
&u_{tt}=\boldsymbol{\Delta} (\theta_1u+\theta_2u_t)+\dot{W}, \
\theta_1>0,\ \theta_2>0;\\
\label{alg-ex3}
&u_{tt}=\theta_1\boldsymbol{\Delta} u-\theta_2\boldsymbol{\Delta}^2u_t+\dot{W}, \ \theta_1>0,\ \theta_2>0.
\end{align}
The following table summarizes the results:

\begin{center}
\begin{tabular}{c c c c}
\hline
Asymptotic & Eq. \eqref{alg-ex1} & Eq. \eqref{alg-ex2} & Eq. \eqref{alg-ex3} \\
\hline
\hline
& & & \\
 $\Psi_{1,N}$ & $N^{\frac{2}{d}+1}$&$ N$ & $ \Upsilon_N(-2/d),\ d\geq 2$ \\
 & & & \\
 \hline
 & & & \\
$\Psi_{2,N}$ & $N$ &$ N^{\frac{2}{d}+1}$  &   $ N^{\frac{4}{d}+1}$  \\
& & & \\
\hline
\end{tabular}
\end{center}

In equations \eqref{alg-ex1}--\eqref{alg-ex3}, 
$\A_1$ and $\B_1$ are {\em leading}
operators, that is, $\alpha=\alpha_1$ and $\beta=\beta_1$.
This, in particular, ensures that the estimator
$\hat{\theta}_{2,N}$ is always consistent.

Let us now consider examples when $\A_1$ and $\B_1$ are not
the leading operators:
\begin{align}
\label{alg-ex4}
&u_{tt}=\big(\boldsymbol{\Delta} u+\theta_1u\big) +\big(\boldsymbol{\Delta} u_t +\theta_2u_t\big) + \dot{W}, \
\theta_1\in \R,\ \theta_2\in \R; \\
\label{alg-ex5}
&u_{tt}+\big(\boldsymbol{\Delta}^2u+\theta_1 u\big)= \big(\theta_2\boldsymbol{\Delta} u_t-\boldsymbol{\Delta}^2u_t\big)+\dot{W},
\theta_1\in \R,\ \theta_2\in R;\\
\label{alg-ex6}
&u_{tt}+\big(\boldsymbol{\Delta}^2 u+ \theta_1\boldsymbol{\Delta}u\big)=
\big(\theta_2u_t-\boldsymbol{\Delta}^2u_t\big)+\dot{W}, \ \theta_1\in \R,\ \theta_2\in \R.
\end{align}
The following table summarizes the results:

\begin{center}
\begin{tabular}{c c c c}
\hline
Asymptotic & Eq. \eqref{alg-ex4} & Eq. \eqref{alg-ex5} & Eq. \eqref{alg-ex6} \\
\hline
\hline
& & & \\
 $\Psi_{1,N}$ & $\Upsilon_N(-4/d),\ d\geq 4$&$ \Upsilon_N(-8/d),\ d\geq 8$ & $ \Upsilon_N(-4/d),\ d\geq 4$ \\
 & & & \\
 \hline
 & & & \\
$\Psi_{2,N}$ & $\Upsilon_N(-2/d),\ d\geq 2$ &$ N$  &   $ \Upsilon_N(-4/d),\ d\geq 4$  \\
& & & \\
\hline
\end{tabular}
\end{center}
As was mentioned in the Introduction, an interested reader can investigate a four-parameter estimation problem, such as
$$
u_{tt}+\big(\theta_{11}\boldsymbol{\Delta}^2 u+ \theta_{12}\boldsymbol{\Delta}u\big)
=\big(\theta_{21}u_t-\theta_{22}\boldsymbol{\Delta}^2u_t\big)+\dot{W}.
$$

\section{Analysis of Estimators: General Case}
\label{sec:GC}

While the algebraic case, corresponding to elliptic partial differential operators,
 seems the most natural, we believe that a more general case, allowing
 eigenvalues such as $\lambda_k\sim e^k$ or $\mu_k\sim \ln k$,
 is also worth considering, not only as a mathematical curiosity, but also as an
 example of a model with  observations coming from independent but not
 identical channels (see Korostelev and Yin \cite{KY}).

 As the proof of Theorem  \ref{th:main-alg} shows, the key arguments involve
 a suitable law of large numbers.  Verification of the corresponding 
 conditions is straightforward in the algebraic case, but is impossible 
  in the general case unless we make additional assumptions about the 
  eigenvalues of the operators. Indeed, as we work with weighted 
  sums of independent random variables, we need some conditions on the 
  weights for a law of large numbers to hold. In particular, the 
  weights should not grow too fast: if $\xi_k,\ k\geq 1,$ are 
  iid standard Gaussian random variables, then the sequence 
  $\{n^{-2}\sum_{k=1}^n n\xi_k^2,\ n\geq 1\}$ converges with 
  probability one to $1/2$, but 
  $\{e^{-n}\sum_{k=1}^n e^k\xi_k^2,\ n\geq 1\}$ does not have a limit, even in probability.

 Theorem \ref{T:3-28} in Appendix summarizes some of the laws of 
 large numbers, and leads to the following
\begin{definition}
 The sequence $\{a_n,\ n\geq 1\}$ of positive numbers  is  called
 {\tt slowly increasing} if 
 \begin{equation}
 \label{slowly-incr-s}
 \lim_{n\to \infty}\frac{\sum_{k=1}^na^2_n}{\Big(\sum_{k=1}^n a_k\Big)^2}=0.
 \end{equation}
 \end{definition}
 
 The purpose of this definition is to simplify the statement 
 of the main theorem (Theorem \ref{th:main-general} below). 
 It was not necessary in the algebraic case because the sequence $\{n^{\gamma},\ n\geq 1\}$ is slowly increasing if and only if $\gamma\geq -1$.
 The reason for the terminology is that the sequence $\{e^{n^{r}},\ n\geq 1\}$ 
 has  property \eqref{slowly-incr-s} if and only if $r<1$. 
 Further discussion of \eqref{slowly-incr-s}, including the connections with the 
  weak law of large numbers, is after the proof of Theorem \ref{T:3-28}.
 
  In general, we have to replace
\eqref{alg-order-cond1} with

\begin{center}
{\bf Condition 1.} The sequence
$\{ \tau_k^2M\big(T\mu_k(\theta_2)\big)/\lambda_k(\theta_1),\ k\geq 1\}$
    is slowly increasing,
\end{center}

 \noindent and   \eqref{alg-order-cond2}, with

\begin{center}
{\bf Condition 2.} The sequence
 $\{ \nu_k^2M\big(T\mu_k(\theta_2)\big),\ k\geq 1\}$
    is slowly increasing.
\end{center}

\begin{theorem}
\label{th:main-general}
Assume that
equation \eqref{E:3-151} is diagonalizable and hyperbolic.
\begin{enumerate}
\item If Condition 1 holds, then
\begin{align}
\label{gen-res-asymp1}
\lim_{N\to \infty}\hat{\theta}_{1,N}=\theta_1 \ \ {\rm in\ probability}; \\
\label{gen-res-asymp2}
\lim_{N\to \infty}\sqrt{\Psi_{1,N}}\Big(\hat{\theta}_{1,N}-\theta_1 \Big)
= \xi_1 \ {\rm in \ distribution,}\
\end{align}
where $\xi_1$ is a standard Gaussian random variable.
\item If Condition 2 holds
  then
\begin{align}
\label{gen-res-asymp3}
\lim_{N\to \infty}\hat{\theta}_{2,N}=\theta_2 \ \ {\rm in\ probability}; \\
\label{gen-res-asymp4}
\lim_{N\to \infty}\sqrt{\Psi_{2,N}}\Big(\hat{\theta}_{2,N}-\theta_2 \Big)
= \xi_2 \ {\rm in \ distribution,}\
\end{align}
where $\xi_2$ is a standard Gaussian random variable.
\item If both Conditions 1 and 2 hold,
then the random variables $\xi_1, \xi_2$ are independent.
\end{enumerate}
\end{theorem}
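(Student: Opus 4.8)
The plan is to run the proof on the same skeleton as the proof of Theorem~\ref{th:main-alg}, making one systematic change: wherever the algebraic argument invoked a strong law of large numbers (justified there by summability of variance-to-mean-square ratios), the general argument invokes instead the weak law of large numbers of Theorem~\ref{T:3-28}, whose hypothesis is precisely that the weight sequence in question be slowly increasing. Through the identities \eqref{mle55} everything reduces to the asymptotics of the independent families $\int_0^T u_k^2(t)\,dt$, $\int_0^T v_k^2(t)\,dt$, $u_k^2(T)$, and of the sums $\iota_{1,N},\iota_{2,N}$, whose first two moments are supplied by \eqref{main-asympt-u0}--\eqref{main-asympt-v}, by \eqref{eq:rates00}, and by $\E\iota_{i,N}^2=\Psi_{i,N}$ from \eqref{iota}. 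Two elementary remarks are used throughout: the estimate $V(x)\le C\,M(x)^2$ for large $|x|$, immediate from \eqref{MV-asympt}; and the fact that a nontrivial slowly increasing sequence is not summable, so that each of Conditions~1 and~2 forces the corresponding $\Psi_{i,N}\to\infty$ (by \eqref{eq:rates1}, $\Psi_{i,N}$ is, up to a multiplicative constant, the partial sum appearing in \eqref{slowly-incr-s}). Note also that, in contrast with the algebraic case, the weak law yields only convergence in probability, which is why \eqref{gen-res-asymp1} and \eqref{gen-res-asymp3} are stated in probability.

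I would first establish the ``denominator'' limits, all in probability: under Condition~1, $K_{1,N}/\Psi_{1,N}\to1$; under Condition~2, $K_{2,N}/\Psi_{2,N}\to1$; and, under either condition, $D_N\to0$. The first two are Chebyshev's inequality: with $a_k$ the Condition-$i$ weight, \eqref{main-asympt-u}, \eqref{main-asympt-v} and $V\le CM^2$ give $\Var K_{i,N}\le C\sum_{k\le N}a_k^2+\mathrm{const.}$ while $\Psi_{i,N}^2\eqconst\bigl(\sum_{k\le N}a_k\bigr)^2$, so $\Var K_{i,N}/\Psi_{i,N}^2\to0$ by the slowly increasing hypothesis. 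For $D_N=K_{12,N}^2/(K_{1,N}K_{2,N})$ it suffices, given the first two limits and $K_{i,N}\ge\mathrm{const.}>0$ a.s., to show $\E K_{12,N}^2=o(\Psi_{1,N}\Psi_{2,N})$; writing $\E K_{12,N}^2=\Psi_{12,N}^2+\Var K_{12,N}$ and using \eqref{main-asympt-u0} for $\Var K_{12,N}$, both pieces follow from the Cauchy--Schwarz split that writes each summand of $\Psi_{12,N}=-\tfrac12\sum_{k\le N}\tau_k\nu_k\,\E u_k^2(T)$ as $\bigl(\tau_k(\E\!\int_0^T u_k^2)^{1/2}\bigr)\bigl(\nu_k\,\E u_k^2(T)\,(\E\!\int_0^T u_k^2)^{-1/2}\bigr)$: the first factor reconstitutes $\Psi_{1,N}$, and the per-term ratio of the square of the second factor to $\nu_k^2\,\E\!\int_0^T v_k^2$ is, by \eqref{main-asympt-u0}--\eqref{main-asympt-v}, comparable to $\max(1,\,T\mu_k(\theta_2))^2/\lambda_k(\theta_1)$, which tends to $0$ because $\lambda_k(\theta_1)\to\infty$ dominates the logarithmic growth $\mu_k(\theta_2)=O(\ln\lambda_k(\theta_1))$ permitted by \eqref{eig-val-cond}.

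Next come the ``numerator'' statements. Since $\E\iota_{i,N}^2=\Psi_{i,N}$ and $\Psi_{i,N}\to\infty$ under Condition~$i$, Chebyshev gives $\iota_{i,N}/\Psi_{i,N}\to0$ in probability, and this together with the denominator limits yields consistency through \eqref{mle55}, the cross term $\iota_{2,N}K_{12,N}/(K_{1,N}K_{2,N})$ being disposed of exactly as in \eqref{pr-alg5-1}. For the central limit, $\iota_{i,N}$ is a sum of independent, mean-zero, second-Wiener-chaos random variables whose variances add up to $\Psi_{i,N}$; hypercontractivity on the second chaos bounds their fourth moments by a constant times the squares of their variances, so the Lyapunov ratio $\sum_{k\le N}\E(\cdot)^4/\Psi_{i,N}^2$ is of order $\sum_{k\le N}a_k^2/(\sum_{k\le N}a_k)^2\to0$ by Condition~$i$, and Corollary~\ref{C:3} gives $\iota_{i,N}/\sqrt{\Psi_{i,N}}\to\xi_i$ in distribution; combined with $\sqrt{\Psi_{i,N}}\,\iota_{j,N}K_{12,N}/(K_{1,N}K_{2,N})\to0$ in probability (the analogue of \eqref{pr-alg5-2}) and Slutsky's theorem, this gives \eqref{gen-res-asymp2} and \eqref{gen-res-asymp4}. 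For part~(3), the same Corollary applied to the bivariate vector $(\iota_{1,N},\iota_{2,N})$ --- a Cram\'er--Wold reduction again producing a sum of second-chaos terms --- together with the asymptotically vanishing correlation $\E[\iota_{1,N}\iota_{2,N}]/\sqrt{\Psi_{1,N}\Psi_{2,N}}=\Psi_{12,N}/\sqrt{\Psi_{1,N}\Psi_{2,N}}\to0$ (established in the previous paragraph), forces $\xi_1$ and $\xi_2$ to be independent.

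The hard part will be the $D_N\to0$ step, and with it the control of the mixed terms, in the regime of unbounded amplification: unlike the algebraic case, where $D_N\to0$ reduced to comparing explicit exponents, here the decisive point is the quantitative trade-off in the Cauchy--Schwarz estimate above between the gain of a factor $1/\lambda_k(\theta_1)$ and the logarithmic loss $\mu_k(\theta_2)=O(\ln\lambda_k(\theta_1))$ coming from \eqref{eig-val-cond}; that $1/\lambda_k(\theta_1)$ beats any power of $\ln\lambda_k(\theta_1)$ is exactly what keeps the per-term ratios going to zero, and keeping the denominators $K_{i,N}$ away from $0$ (a.s.\ lower bounds plus the concentration just proved) is then what turns this into $D_N\to0$ in probability. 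A secondary point worth isolating is the verification that being slowly increasing is precisely the hypothesis making both the variance estimate for $K_{i,N}/\Psi_{i,N}$ and the Lyapunov condition for the central limit theorem valid; this is where the equivalence of Condition~$i$ with convergence in \eqref{slowly-incr-s} is genuinely used, and it is also why the notion of ``slowly increasing'' was isolated before the statement of the theorem.
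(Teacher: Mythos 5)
Your proposal follows essentially the same route as the paper: replace the strong laws of large numbers of the algebraic case by the weak law (Chebyshev, using the Gaussian fourth-moment bound $V\le C\,M^2$ and the slowly-increasing hypothesis), control $K_{12,N}$ by a Cauchy--Schwarz split whose decisive ingredient is the hyperbolicity-driven limit $\big(1+\max(0,T\mu_k(\theta_2))\big)^2/\lambda_k(\theta_1)\to0$, and conclude via Corollary \ref{C:3} and Slutsky. The identification of the $D_N$ step, and of the trade-off between the gain $1/\lambda_k(\theta_1)$ and the logarithmic loss permitted by \eqref{eig-val-cond}, is exactly the paper's \eqref{gen-aux-12}--\eqref{pr-gen-1}.

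One point needs adjustment. Your Cauchy--Schwarz split assigns the full weight $\tau_k^2\,\E\!\int_0^Tu_k^2$ to one factor and pushes the small ratio $\max(1,T\mu_k(\theta_2))^2/\lambda_k(\theta_1)$ entirely onto the $\nu_k$-side; the resulting bound is $|\Psi_{12,N}|^2\le\Psi_{1,N}\sum_{k\le N}\big(\nu_k^2\,\E\!\int_0^Tv_k^2\big)\epsilon_k$ with $\epsilon_k\to0$, which is $o(\Psi_{1,N}\Psi_{2,N})$ only if $\Psi_{2,N}\to\infty$. Under Condition~1 alone $\Psi_{2,N}$ may stay bounded, so in that case you must use the opposite arrangement --- attach the vanishing factor to the $\tau_k$-side so that the divergent sum $\Psi_{1,N}$ absorbs it --- which is precisely how the paper states its two estimates for $\E|K_{12,N}|$, one per Condition. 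Relatedly, the paper works with the first moment $\E|K_{12,N}|$ plus Markov's inequality, whereas your second-moment route additionally requires bounding $\Var K_{12,N}$ by the same quantity; that bound does hold (by \eqref{main-asympt-u0}, $\Var u_k^2(T)\asymp(\E u_k^2(T))^2$), but it is an extra step the first-moment argument avoids. With the arrangement swapped according to which Condition is in force, your argument closes and coincides with the paper's.
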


\begin{proof}
The main steps are the same as in the algebraic case
(Theorem \ref{th:main-alg}). In particular,
\eqref{pr-alg1-1} and \eqref{pr-alg2-1} continue to hold as long
as $\Psi_{1,N}\to \infty$ and $\Psi_{2,N}\to \infty$, respectively.
The only difference is that
 Conditions 1 and  2 do not provide enough information
about the almost sure behavior of $K_{12,N}/\E K_{12,N}$, and, 
in this general setting, there is no natural condition that would do that.
As a result,  in \eqref{pr-alg1-3},
 the convergence is in probability rather
than with probability one, and then, in both \eqref{pr-alg1-2}
and \eqref{pr-alg1-4}, convergence in probability will suffice.
Conditions 1 and 2 ensure 
\eqref{pr-alg1-2} and \eqref{pr-alg1-4},
respectively, but with convergence in probability rather than almost sure. 
This is a direct consequence of the weak law of large numbers. 

In the case of \eqref{pr-alg1-3}, we have
$$
\E |K_{12,N}| \leq \sum_{k=1}^N |\tau_k\nu_k|\, \E u_k^2(T)
$$
and, for all sufficiently large $k$,
$$
\E u_k^2 (T) \leq \frac{4T}{\lambda_k(\theta_1)} M\big(T\mu_k(\theta_2)\big)
\Big(1+\max\big(0,T\mu_k(\theta_2)\big)\Big),
$$
because  $xe^x-x \leq 4(e^x-x-1)(1+\max(0,x))$ for all $x\in \R$.
Then
\begin{equation}
\label{gen-aux-12}
\lim_{N\to \infty} \frac{\E |K_{12,N}|}{\sqrt{\Psi_{1,N}\Psi_{2,N}}}=0.
\end{equation}
Indeed, under Condition 1, \eqref{gen-aux-12} follows from
$$
\frac{\big(\E |K_{12,N}|\big)^2}{{\Psi_{1,N}\Psi_{2,N}}} \leq
\frac{16\sum_{k=1}^N
\frac{\tau_k^2M\big(T\mu_k(\theta_2)\big)}{\lambda_k(\theta_1)}
\ \frac{ \big(1+\max\big(0,T\mu_k(\theta_2)\big)\big)^2}{\lambda_k(\theta_1)}}
{T^2\sum_{k=1}^N
\frac{\tau_k^2M\big(T\mu_k(\theta_2)\big)}{\lambda_k(\theta_1)}}
$$
(Cauchy-Schwartz inequality) and
\begin{equation}
\label{pr-gen-1}
\lim_{k\to \infty} \frac{ \big(1+\max\big(0,T\mu_k(\theta_2)\big)\big)^2}{\lambda_k(\theta_1)}=0
\end{equation}
(hyperbolicity condition),
while, under Condition 2, \eqref{gen-aux-12} follows from
$$
\frac{\big(\E |K_{12,N}|\big)^2}{{\Psi_{1,N}\Psi_{2,N}}} \leq
\frac{16\sum_{k=1}^N
{\nu_k^2M\big(T\mu_k(\theta_2)\big)}
\ \frac{  \big(1+\max\big(0,T\mu_k(\theta_2)\big)\big)^2}{\lambda_k(\theta_1)}}
{T^2\sum_{k=1}^N
{\nu_k^2M\big(T\mu_k(\theta_2)\big)}}
$$
(Cauchy-Schwartz inequality with a different arrangement of terms)
and \eqref{pr-gen-1}.

The interested reader can fill in the details in the rest of the proof.
\end{proof}

As an example, consider the operators  with eigenvalues 
$\kappa_k=e^{2k}$, $\tau_k=e^k$, $\rho_k=0$, $\nu_k=\ln\ln(k+3)$
and assume that $\theta_1>0$, $\theta_2>0$.  Then 
$$
\lambda_k=e^{2k}+\theta_1e^k,\ \mu_k=\theta_2\ln\ln (k+3),
$$ 
so that $\tau_k^2/\lambda_k \sim 1$. Next, for all sufficiently large $k$, 
$$
\big(\ln (k+3)\big)^{T\theta_2/2}< M(T\mu_k) < \big(\ln (k+3)\big)^{T\theta_2},
$$
and also  $\nu_k^2M(T\mu_k) \asymp \big(\ln (k+3)\big)^{T\theta_2}$. 
Using integral comparison, we conclude that, for all $r>0$, 
 $$
 \sum_{k=1}^N \big(\ln k\big)^r
\sim N\big(\ln N\big)^r.
$$
 Thus, both Condition 1 and Condition 2 hold. 
By Theorem \ref{th:main-general}, both $\hat{\theta}_{1,N}$ 
and $\hat{\theta}_{2,N}$ are consistent and asymptotically 
normal. An interested reader can verify that 
$$
\Psi_{1,N}\asymp \frac{N\big(\ln N\big)^{T\theta_2}}{\big(\ln \ln N\big)^2},\ \
\Psi_{2,N}\asymp N\big(\ln N\big)^{T\theta_2}.
$$

\section{Acknowledgement} The work of both authors
 was partially supported
by the NSF Grant DMS-0803378.

\section*{Appendix}
\renewcommand{\theequation}{A.\arabic{equation}}
\setcounter{equation}{0}

\renewcommand{\thetheorem}{A.\arabic{theorem}}
\setcounter{theorem}{0}

First, let us recall some basic facts about second-order stochastic
ordinary differential equations with constant coefficients.
Consider the initial value problem
\begin{equation}
\label{ap-ode1}
\ddot{y}(t)-2b\dot{y}(t)+a^2y(t)=0,\ \ y(0)=0,\ \ \dot{y}(0)=1.
\end{equation}
With $2b=\mu_k(\theta_2)$ and $a^2=\lambda_k(\theta_1)$,
we recover \eqref{fund-sol}; recall that $\lambda_k(\theta_1)>0$
for all sufficiently large $k$. If $\ell=\sqrt{|b^2-a^2|}$, then
\begin{equation}
\label{ap-ode2}
y(t)=
\begin{cases}
\ds \frac{\sin(\ell t)}{\ell}\ e^{bt},&\ a^2>b^2; \\
\ds \ \ \ \ \ te^{bt},& a^2=b^2;\\
\ds \frac{\sinh(\ell t)}{\ell}\ e^{bt},& a^2<b^2;
\end{cases}
\end{equation}
as usual, $\sinh x = (e^x-e^{-x})/2$. Note that if $b<0$ and $b^2>a^2$, then $b+\ell<0$. The solution of the inhomogeneous equation 
$$
\ddot{x}(t)-2b\dot{x}(t)+a^2x(t)=f(t),\ \ x(0)=\dot{x}(0)=0
$$
is then $x(t)=\int_0^t y(t-s)f(s)ds$. 

Next, we formulate the  laws of large numbers and the central
limit theorem used in the proof of consistency and asymptotic normality of the
estimators.

To begin, let us recall Kolmogorov's strong law of large numbers.
 \begin{theorem}
 \label{prop-KSLLN}
  Let $\{\xi_k,k\geq 1\}$ be a sequence of independent random variables
  with $\E \xi_n^2<\infty$. If $\{b_n\, \n\geq 1\}$ is an unbounded increasing sequence of real numbers ($b_n\nearrow +\infty$) and \\
 $\sum_{n\geq 1}b_n^{-2} \Var(\xi_n)<\infty$, then
 \begin{equation}
 \label{ap-lln0}
 \lim_{n\to \infty}b_n^{-1}\sum_{k=1}^n (\xi_k-\E \xi_k)=0
  \end{equation}
  with probability one.
 \end{theorem}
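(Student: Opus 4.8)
The plan is to follow the classical two-step route. First I would establish almost sure convergence of the weighted series $\sum_{k\ge 1} b_k^{-1}(\xi_k-\E\xi_k)$, and then pass from convergence of that series to \eqref{ap-lln0} by Kronecker's lemma. At the outset, replacing each $\xi_k$ by $\xi_k-\E\xi_k$, we may assume $\E\xi_k=0$ for all $k$, so the goal reduces to showing $b_n^{-1}\sum_{k=1}^n\xi_k\to 0$ almost surely.

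For the first step, set $M_n=\sum_{k=1}^n b_k^{-1}\xi_k$. Since the $\xi_k$ are independent and centered, $\{M_n,\ n\ge 1\}$ is a martingale relative to the filtration generated by $\xi_1,\dots,\xi_n$, and
\[
\E M_n^2=\sum_{k=1}^n b_k^{-2}\Var(\xi_k)\le \sum_{k\ge 1} b_k^{-2}\Var(\xi_k)<\infty
\]
by hypothesis, so $\{M_n\}$ is bounded in $L^2$. The martingale convergence theorem then produces a finite random variable $M_\infty$ with $M_n\to M_\infty$ almost surely. If one prefers to avoid citing martingale convergence, the same conclusion follows from Kolmogorov's maximal inequality: for $m<n$ one has $\mathbb{P}\big(\max_{m\le j\le n}|M_j-M_m|>\e\big)\le \e^{-2}\sum_{k>m}b_k^{-2}\Var(\xi_k)$, and letting first $n\to\infty$ and then $m\to\infty$ shows the partial sums are almost surely Cauchy.

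For the second step, I would invoke Kronecker's lemma: if $b_n\nearrow+\infty$ and $\sum_{k\ge 1} x_k$ converges, then $b_n^{-1}\sum_{k=1}^n b_k x_k\to 0$. Applying this pathwise with $x_k=b_k^{-1}\xi_k$, on the almost sure event where $\sum_k b_k^{-1}\xi_k$ converges, gives $b_n^{-1}\sum_{k=1}^n\xi_k\to 0$ almost surely, which is exactly \eqref{ap-lln0}. I do not expect a genuine obstacle here: the only ingredients are an $L^2$-bounded martingale (equivalently, Kolmogorov's maximal inequality) and Kronecker's lemma, both elementary and standard. The one piece worth spelling out if full self-containedness is desired is the proof of Kolmogorov's maximal inequality, together with the short summation-by-parts argument behind Kronecker's lemma.
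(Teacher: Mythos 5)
Your proof is correct: the $L^2$-bounded martingale (equivalently, Kolmogorov's maximal inequality) argument for almost sure convergence of $\sum_k b_k^{-1}(\xi_k-\E\xi_k)$ followed by Kronecker's lemma is the classical proof of Kolmogorov's strong law. The paper does not prove this result itself but simply cites Shiryaev \cite[Theorem IV.3.2]{Shir}, and your argument is precisely the standard one given there, so there is nothing to reconcile.
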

 \begin{proof} See, for example, Shiryaev \cite[Theorem IV.3.2]{Shir}.
 \end{proof}

The following  laws of large numbers, both strong and weak, are often used in
 the current paper.

\begin{theorem}[Several Laws of Large Numbers]
\label{T:3-28}
Let $\chi_k, \ k\geq 1,$ be independent random variables, each with zero mean
 and positive finite variance. If
 \begin{equation}
 \label{ap0}
 \sum_{k\geq 1} \E\chi_k^2=+\infty,
 \end{equation}
 then
 \begin{equation}
\label{ap1}
\lim_{N\to \infty} \frac{\sum_{k=1}^N\chi_k}{\sum_{k=1}^N \E \chi_k^2}=0
\ {\rm with\  probability\  one}.
\end{equation}

Next, assume in addition that
\begin{equation}
\label{ap01}\E \chi_k^4 \leq c_1\Big(\E\chi_k^2\Big)^2
\end{equation}
for all $k\geq 1$,
with $c_1>0$ independent of $k$. Then
\begin{equation}
\label{slow0s}
\lim_{n\to \infty}
\frac{\sum_{k=1}^n\Big(\E\chi_k^2\Big)^2}{\Big(\sum_{k=1}^n \E\chi_k^2
\Big)^2}=0
\end{equation}
 implies
\begin{equation}
\lim_{N\to \infty} \frac{\sum_{k=1}^N\chi_k^2}
{\sum_{k=1}^N \E \chi_k^2}=1\ {\rm in \ probability}
\label{ap2s}
\end{equation}
and
\begin{equation}
\label{slow0}
\sum_{n\geq 1} \frac{\Big(\E\chi_n^2\Big)^2}{\Big(\sum_{k=1}^n \E\chi_k^2
\Big)^2}<\infty,
\end{equation}
 implies
\begin{equation}
\lim_{N\to \infty} \frac{\sum_{k=1}^N\chi_k^2}
{\sum_{k=1}^N \E \chi_k^2}=1\ {\rm with \ probability\ one}.
\label{ap2}
\end{equation}
\end{theorem}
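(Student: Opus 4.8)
The plan is to read the statement as a package of three standard limit theorems for weighted sums of independent, square-integrable random variables and to obtain each one by matching hypotheses with Kolmogorov's strong law (Theorem \ref{prop-KSLLN}) or with Chebyshev's inequality. Throughout I write $a_k=\E\chi_k^2=\Var\chi_k>0$ and $b_N=\sum_{k=1}^N a_k$; by \eqref{ap0} the sequence $\{b_N\}$ is increasing and $b_N\nearrow+\infty$. For \eqref{ap1} I would apply Theorem \ref{prop-KSLLN} to the sequence $\{\chi_k\}$ with the normalizing constants $b_N$. The only hypothesis to verify is $\sum_{N\ge1}b_N^{-2}\Var\chi_N=\sum_{N\ge1}a_N/b_N^2<\infty$, and this is the elementary telescoping estimate: for $N\ge2$, $a_N/b_N^2\le a_N/(b_Nb_{N-1})=b_{N-1}^{-1}-b_N^{-1}$, so the partial sums are bounded by $a_1^{-1}+b_1^{-1}<\infty$. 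Since $\E\chi_k=0$, the conclusion of Theorem \ref{prop-KSLLN} reads $b_N^{-1}\sum_{k=1}^N\chi_k\to0$ almost surely, which is \eqref{ap1}.

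For the strong law, namely \eqref{slow0}$\Rightarrow$\eqref{ap2}, I would introduce $\eta_k=\chi_k^2-\E\chi_k^2$. These are independent with zero mean, and by \eqref{ap01} they have finite second moment; moreover $\Var\eta_k=\Var(\chi_k^2)\le\E\chi_k^4\le c_1a_k^2$. Applying Theorem \ref{prop-KSLLN} to $\{\eta_k\}$ with the same constants $b_N$ (which is legitimate since $b_N\nearrow+\infty$ by \eqref{ap0}), the summability hypothesis becomes $\sum_{N\ge1}b_N^{-2}\Var\eta_N\le c_1\sum_{N\ge1}a_N^2/b_N^2<\infty$, which is exactly \eqref{slow0}. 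Hence $b_N^{-1}\sum_{k=1}^N\eta_k\to0$ almost surely, i.e. $b_N^{-1}\sum_{k=1}^N\chi_k^2\to1$ almost surely, which is \eqref{ap2}.

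For the weak law, \eqref{slow0s}$\Rightarrow$\eqref{ap2s}, a second moment argument suffices: set $S_N=b_N^{-1}\sum_{k=1}^N\chi_k^2$, so $\E S_N=1$, and by independence and \eqref{ap01}, $\Var S_N=b_N^{-2}\sum_{k=1}^N\Var(\chi_k^2)\le c_1b_N^{-2}\sum_{k=1}^N a_k^2\to0$ by \eqref{slow0s}. Chebyshev's inequality then gives $S_N\to1$ in probability, which is \eqref{ap2s}. The arguments are short, and the only point that genuinely needs care is the telescoping bound $\sum_N a_N/b_N^2<\infty$: it is precisely what makes the bare divergence \eqref{ap0}, with no quantitative growth rate for $b_N$, sufficient in \eqref{ap1}, and — combined with the fourth-moment control $\Var(\chi_k^2)\le c_1a_k^2$ coming from \eqref{ap01} — it is what lets the same Kolmogorov machinery be reused for the squares $\chi_k^2$. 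Note also that \eqref{ap0} enters the second part only to ensure that Kolmogorov's normalizing sequence is unbounded.
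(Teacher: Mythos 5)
Your proposal is correct and follows essentially the same route as the paper: Kolmogorov's strong law (Theorem \ref{prop-KSLLN}) applied to $\chi_k$ with the telescoping bound $\sum_n a_n/b_n^2<\infty$ for \eqref{ap1}, the same theorem applied to $\chi_k^2$ with the variance bound from \eqref{ap01} for \eqref{slow0}$\Rightarrow$\eqref{ap2}, and Chebyshev's inequality for \eqref{slow0s}$\Rightarrow$\eqref{ap2s}. You merely spell out a detail the paper leaves implicit, namely that $\Var(\chi_k^2)\leq \E\chi_k^4\leq c_1\big(\E\chi_k^2\big)^2$ converts the required summability condition into exactly \eqref{slow0}.
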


\begin{proof}  To prove \eqref{ap1},
we take $\xi_n=\chi_n$ and $b_n=\sum_{k=1}^n \E\chi_k^2$ and apply
Theorem  \ref{prop-KSLLN}; note that
  convergence of
 $\sum_n b_n^{-2}\E \chi_n^2$ follows from divergence of
  $\sum_{k\geq 1} \E \chi_k^2$:
  $$
  \sum_n \frac{\E \chi_n^2}{b_n^{2}} \leq  \sum_n\left( \frac{1}{b_{n-1}}-\frac{1}{b_n}\right).
  $$
To prove \eqref{ap2}, we take $\xi_n=\chi_n^2$ and $b_n=\sum_{k=1}^n \E\chi_k^2$, and again apply Theorem \ref{prop-KSLLN}; this time,
we have to {\em assume}  convergence of the series
$\sum_n \Var{\xi_n}b_n^{-2}$. Finally, \eqref{ap2s} follows from
\eqref{slow0s} and Chebyshev's inequality.
\end{proof}

In other words, normalizing a sum of zero-mean random variables by the
total {\em variance} will give in the limit zero with probability one as ling as the
total variance is unbounded, while normalizing a sum of positive
random variables by the total {\em mean} will give in the limit
 one only under some
additional assumptions. Given a collection of iid standard normal random variables
$\{\xi_k,\ k\geq 1\}$, an interested reader can verify that the
sequence $\big(\sum_{k=1}^n e^k\xi_k^2\big)/\big(\sum_{k=1}^n e^k\big)$
does not converge  in probability as $n\to \infty$.

To understand the meaning of conditions \eqref{slow0s} and \eqref{slow0},
note that if $\xi_k, \ k\geq 1,$ are iid non-negative random variables with
 $\E \xi_1=A>0 $, then,  taking in Theorem \ref{prop-KSLLN}
 $b_n=\sum_{k=1}^n \E \xi_k=An$, we recover the classical
 strong law of large numbers:
 $$
 \lim_{n\to \infty} \frac{1}{n} \sum_{k=1}^n \xi_k=A
 $$
 with probability one. In the second part of Theorem \ref{T:3-28},
 we want to establish a similar result when the random variables $\xi_k$ are
  positive and independent, but not identically distributed.
  Condition \eqref{ap01} (which holds, for example, for Gaussian random
  variables)
  allows us to apply  Theorem \ref{prop-KSLLN} with  $b_n=\sum_{k=1}^N\E \xi_k$.  If
  $a_k:=\E \xi_k>0$ for all $k$, then conditions \eqref{ap01} and \eqref{slow0}  become, respectively,
  \begin{align}
 \label{slowincr1}
 &\sum_{n\geq 1} a_n=+\infty,\\
 \label{slowincr2}
 &\sum_{n\geq 1} \frac{a_n^2}{\left(\sum_{k=1}^n a_k\right)^2} < \infty.
 \end{align}
 On the other hand, if
 \begin{equation}
 \label{slowincr2s-a}
 \lim_{n\to \infty} \frac{\sum_{k=1}^na_k^2}{\left(\sum_{k=1}^n a_k\right)^2}=0,
 \end{equation}
 then  Chebyshev's  inequality leads to a weak law of large numbers.

    In general,  \eqref{slowincr1} does not imply
    \eqref{slowincr2} or \eqref{slowincr2s-a}
 (take $a_n=e^n)$, nor does  \eqref{slowincr2} imply \eqref{slowincr1} (take $a_n=1/n^2$), but obviously \eqref{slowincr2s-a} implies \eqref{slowincr1}.
 An interested reader can also verify that the sequence $\{e^{\sqrt{n}},\ n\geq 1\}$ satisfies \eqref{slowincr2s-a} but not \eqref{slowincr2}. On the other hand, 
 since we use \eqref{slowincr1} and \eqref{slowincr2} to prove a strong law of 
 large numbers, and use \eqref{slowincr2s-a} to prove a weak law 
 of large numbers, it will be natural to expect that conditions \eqref{slowincr1} and \eqref{slowincr2} together are stronger than \eqref{slowincr2s-a}. 
 Kronecker's Lemma (see \cite[Lemma IV.3.2]{Shir} with 
 $b_n=\left(\sum_{k=1}^n a_n\right)^2,\ x_n=a_n^2/b_n$) shows that 
 this is indeed the case: \eqref{slowincr1} and \eqref{slowincr2} imply \eqref{slowincr2s-a}. 
 
 We say that a sequence of positive numbers $\{a_n, \ n\geq 1\}$
 is {\tt slowly increasing} if  condition \eqref{slowincr2s-a} holds.  The notion of a slowly increasing sequence simplifies the conditions for consistency and asymptotic normality of the estimators in the general (non-algebraic) setting. Related
 conditions in the context of the law of large numbers can be found, for example, in the paper \cite{WLLN}. If $a_n=n^{\gamma},\ \gamma \in \R$ (algebraic case), then  \eqref{slowincr1} (that is, $\gamma\geq -1$) implies   \eqref{slowincr2},
 which is the reason for the strong consistency in Theorem \ref{th:main-alg}.

The following theorem is  used to prove asymptotic normality of
the estimators.

\begin{theorem}[A Martingale Central Limit Theorem]
\label{T:1-3}
Let $M_{i,n}=M_{i,n}(t)$, $t\geq 0$, $n\geq1$, $i=1,2$,
 be two sequences of continuous
square--integrable martingales.  If, for some $T>0$,
\[
\lim_{n\to \infty}\frac{\langle M_{i,n}\rangle(T)}{\E \langle
M_n\rangle(T)}=1,\ i=1,2, \quad {\rm \ in\  probability},
\]
and
$$
\lim_{n\to \infty}\frac{\langle M_{1,n}, M_{2,n}\rangle(T)}
{\Big(\E \langle M_{1,n}\rangle(T)\Big)^{1/2}
\Big(\E \langle M_{2,n}\rangle(T)\Big)^{1/2}}=0 \quad {\rm \ in\  probability},
$$
then
\[
\lim_{n\to\infty}
\left(\begin{array}{l}
 M_{1,n}(T)\big(\E \langle M_{1,n}\rangle(T)\big)^{-1/2}\\
 M_{2,n}(T)\big(\E \langle M_{2,n}\rangle(T)\big)^{-1/2}
\end{array}
\right)
=\mathcal{N}(0,I) \quad {\rm \ in \ distribution},
\]
 where $\mathcal{N}(0,I)$ is a two-dimensional vector whose
 components are independent standard Gaussian random variables.
\end{theorem}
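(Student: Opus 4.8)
The plan is to deduce this two-dimensional limit from a one-dimensional martingale central limit theorem via the Cram\'er--Wold device, and to prove the one-dimensional statement by localizing the quadratic variation and using the complex exponential martingale attached to a continuous martingale. Write $c_{i,n}=\E\langle M_{i,n}\rangle(T)$, which I take to be finite and strictly positive, and assume (as in the application) that $M_{1,n}$ and $M_{2,n}$ are martingales relative to a common filtration with $M_{i,n}(0)=0$. Fix $a,b\in\R$ and put $L_n(t)=a\,c_{1,n}^{-1/2}M_{1,n}(t)+b\,c_{2,n}^{-1/2}M_{2,n}(t)$, a continuous square-integrable martingale vanishing at $0$ whose quadratic variation satisfies
\[
\langle L_n\rangle(T)=a^2\,\frac{\langle M_{1,n}\rangle(T)}{c_{1,n}}+b^2\,\frac{\langle M_{2,n}\rangle(T)}{c_{2,n}}+2ab\,\frac{\langle M_{1,n},M_{2,n}\rangle(T)}{\sqrt{c_{1,n}c_{2,n}}}.
\]
The three hypotheses of the theorem say precisely that the three summands on the right converge in probability to $a^2$, $b^2$ and $0$, so $\langle L_n\rangle(T)\to\sigma^2:=a^2+b^2$ in probability, with a \emph{constant} limit. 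Since every one-dimensional projection of $\mathcal N(0,I)$ is $\mathcal N(0,a^2+b^2)$, the Cram\'er--Wold device reduces the theorem to the claim that, for continuous square-integrable martingales $L_n$ with $L_n(0)=0$ and $\langle L_n\rangle(T)\to\sigma^2$ in probability for a constant $\sigma^2\ge0$, one has $L_n(T)\to\mathcal N(0,\sigma^2)$ in distribution.

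For that claim I would fix $\e>0$, set $S_n=\inf\{s\ge0:\langle L_n\rangle(s)\ge\sigma^2+\e\}\wedge T$, and work with the stopped martingale $N_n=L_n^{S_n}$. Monotonicity of $\langle L_n\rangle$ gives $\{S_n<T\}\subseteq\{\langle L_n\rangle(T)\ge\sigma^2+\e\}$, hence $\mathbb P(S_n<T)\to0$; consequently $N_n(T)-L_n(T)\to0$ in probability, $\langle N_n\rangle(T)\to\sigma^2$ in probability, and $0\le\langle N_n\rangle(T)\le\sigma^2+\e$ for every $n$. For real $u$, It\^{o}'s formula shows that $\mathcal E_n(t)=\exp\!\big(iuN_n(t)+\tfrac{u^2}{2}\langle N_n\rangle(t)\big)$ is a local martingale --- continuity of $N_n$ is what makes this identity exact, with no jump compensator --- and $|\mathcal E_n(t)|\le e^{u^2(\sigma^2+\e)/2}$, so $\mathcal E_n$ is a bounded, hence genuine, martingale and $\E\mathcal E_n(T)=\mathcal E_n(0)=1$. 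Writing $e^{iuN_n(T)}=\mathcal E_n(T)\,e^{-u^2\langle N_n\rangle(T)/2}$ and replacing $\langle N_n\rangle(T)$ by $\sigma^2$ in the last exponential,
\[
\E\big[e^{iuN_n(T)}\big]=e^{-u^2\sigma^2/2}+R_n(u),\qquad
|R_n(u)|\le e^{u^2(\sigma^2+\e)/2}\,\E\big|e^{-u^2\langle N_n\rangle(T)/2}-e^{-u^2\sigma^2/2}\big|.
\]
The quantity $\big|e^{-u^2\langle N_n\rangle(T)/2}-e^{-u^2\sigma^2/2}\big|$ is bounded by $1$ and tends to $0$ in probability, so $R_n(u)\to0$ by bounded convergence and $\E[e^{iuN_n(T)}]\to e^{-u^2\sigma^2/2}$ for every $u$; since $N_n(T)-L_n(T)\to0$ in probability, the same holds with $L_n$ in place of $N_n$, and L\'evy's continuity theorem yields $L_n(T)\to\mathcal N(0,\sigma^2)$. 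Running Cram\'er--Wold back with $\sigma^2=a^2+b^2$ then identifies the limit of $\big(M_{1,n}(T)c_{1,n}^{-1/2},M_{2,n}(T)c_{2,n}^{-1/2}\big)$ as $\mathcal N(0,I)$.

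The crux of the argument is the one-dimensional step: turning convergence \emph{in probability} of $\langle L_n\rangle(T)$ into convergence \emph{in distribution} of $L_n(T)$, since one cannot naively replace the random $\langle L_n\rangle(T)$ by the constant $\sigma^2$ at the relevant scale. Localizing the quadratic variation at level $\sigma^2+\e$ is what makes the complex exponential martingale bounded, hence a true martingale, which is what converts the identity $\E\mathcal E_n(T)=1$ into control of the characteristic function of $N_n(T)$; everything after that is bounded convergence and a Slutsky-type comparison of $L_n(T)$ with $N_n(T)$. A secondary thing to watch is that continuity of the martingales is used twice --- to form $\exp(iuN_n+\tfrac{u^2}{2}\langle N_n\rangle)$ without a jump term and to identify $\{S_n<T\}$ with an event depending only on $\langle L_n\rangle(T)$ --- so for martingales with jumps the scheme would have to be supplemented by a Lindeberg-type condition on the jumps.
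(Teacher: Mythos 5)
Your proof is correct, but it takes a genuinely different route from the paper. The paper disposes of this theorem in a few lines by quoting a multidimensional martingale central limit theorem from Liptser and Shiryaev (\emph{Theory of Martingales}, Theorem 5.5.11): if continuous square-integrable $\R^d$-valued martingales $X_n$ have $\langle X_n\rangle(T)\to\langle X\rangle(T)$ in probability for a Gaussian martingale $X$, then $X_n(T)\to X(T)$ in distribution; one then takes $X(t)=\big(w_1(t)/\sqrt{T},\,w_2(t)/\sqrt{T}\big)$, so that $\langle X\rangle(T)=I$ and the three hypotheses are exactly the entrywise convergence of $\langle X_n\rangle(T)$ to $I$. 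You instead reduce to dimension one by Cram\'er--Wold (which is legitimate here precisely because the limiting quadratic variation $a^2+b^2$ of the linear combination $L_n$ is a \emph{constant}, so the one-dimensional limit law $\mathcal N(0,a^2+b^2)$ matches the corresponding projection of $\mathcal N(0,I)$) and then prove the scalar statement from scratch via the stopped complex exponential martingale; this is essentially the textbook proof of the result the paper cites. The localization at level $\sigma^2+\e$, the boundedness of $\mathcal E_n$, the bounded-convergence estimate of $R_n(u)$, and the Slutsky comparison of $N_n(T)$ with $L_n(T)$ are all handled correctly, and your implicit assumptions (common filtration, $M_{i,n}(0)=0$, $0<\E\langle M_{i,n}\rangle(T)<\infty$) are exactly those under which the theorem is applied in Corollary \ref{C:3}. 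What your argument buys is self-containedness and transparency about where continuity of the martingales enters; what the paper's citation buys is brevity and, in principle, a stronger functional statement (convergence of the whole process rather than only its value at $T$), although only the marginal at $T$ is used.
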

\begin{proof}
If $X_n$ and $X$ are continuous square-integrable martingales with values in
$\R^d$ such
that $X$ is a Gaussian process and $\lim_{n\to \infty}\langle
X_n\rangle(T)=\langle X\rangle(T)$ in probability, then $\lim_{n\to
\infty}X_n(T)=X(T)$ in distribution; recall that, for a vector-valued martingale
$X=(X^{(1)}, \ldots, X^{(d)})$,
 $\langle X \rangle(t)$ is the symmetric matrix with entries
  $\langle X^{(i)}, X^{(j)} \rangle (t)$. This is  one of the central limit theorems for
martingales;  see, for example, Lipster and Shiryaev \cite[Theorem
5.5.11]{LSh3}. The result now follows if we take
\[
X_n(t)=
\left(
\begin{array}{l}
X_{1,n}\\
X_{2,n}
\end{array}
\right),\ \
X(t)=
\left(
\begin{array}{l}
w_1(t)/\sqrt{T}\\
w_2(t)/\sqrt{T}
\end{array}
\right),
\]
where
$$
X_{i,n}=\frac{M_{i,n}(t)}{\big(\E \langle M_{i,n}\rangle(T)\big)^{1/2}}, \
i=1,2,
$$
and $w_1, w_2$ are independent standard Brownian motions.
\end{proof}

\begin{corollary}\label{C:3}
Let $f_{i,k}=f_{i,k}(t),$ $t\geq 0$, $i=1,2$, $k\geq 1$ be continuous,
square-integrable processes and $w_k=w_k(t)$
 be independent standard Brownian motions. 
 Define
 $$
 \eta_{i,N}=\frac{\sum_{k=1}^N\int_0^T
 f_{i,k}(t)dw_k(t)}{\left(\sum_{k=1}^N\E \int_0^Tf^2_{i,k}(t)dt
 \right)^{1/2}},\ i=1,2.
 $$
 If 
 \begin{align}
 &\lim_{N\to \infty}
 \frac{\sum_{k=1}^N\int_0^Tf^2_{i,k}(t)dt}{\sum_{k=1}^N\E \int_0^T
 f^2_{i,k}(t)dt}=1 \quad {\rm \ in\  probability,}\ \ {\rm and \ } \\
 &\lim_{N\to \infty}
 \frac{\sum_{k=1}^N\E \left|\int_0^T
 f_{1,k}(t)f_{2,k}(t)dt\right|}{\left(\sum_{k=1}^N\E \int_0^T
 f^2_{1,k}(t)dt \right)^{1/2} \left( \sum_{k=1}^N\E \int_0^T
 f^2_{2,k}(t)dt\right)^{1/2}}=0, \notag
 \end{align}
 then
 \[
 \lim_{N\to \infty}
 \left(
 \begin{array}{l}
 \eta_{1,N}\\
 \eta_{2,N}
 \end{array}
 \right)
 =\mathcal{N}(0,I) \quad {\rm \ in \ distribution},
 \]
 where $\mathcal{N}(0,I)$ is a two-dimensional vector whose
 components are independent standard Gaussian random variables.
\end{corollary}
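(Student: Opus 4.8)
The plan is to apply the Martingale Central Limit Theorem (Theorem~\ref{T:1-3}) with the obvious choice of martingales. Define, for $i=1,2$ and $n\geq 1$,
$$
M_{i,n}(t)=\sum_{k=1}^n\int_0^t f_{i,k}(s)\,dw_k(s),\qquad 0\leq t\leq T.
$$
Since each $f_{i,k}$ is continuous and square-integrable, so that $\E\int_0^T f_{i,k}^2(s)\,ds<\infty$, each $M_{i,n}$ is a continuous square-integrable martingale. First I would compute the quadratic and cross variations: because the Brownian motions $w_k$ are mutually independent, the stochastic integrals $\int_0^\cdot f_{i,k}\,dw_k$ are orthogonal martingales, so
$$
\langle M_{i,n}\rangle(t)=\sum_{k=1}^n\int_0^t f_{i,k}^2(s)\,ds,\qquad
\langle M_{1,n},M_{2,n}\rangle(t)=\sum_{k=1}^n\int_0^t f_{1,k}(s)f_{2,k}(s)\,ds.
$$
Taking expectations gives $\E\langle M_{i,n}\rangle(T)=\sum_{k=1}^n\E\int_0^T f_{i,k}^2(s)\,ds$, which is exactly the square of the normalizing constant in the definition of $\eta_{i,N}$; thus $\eta_{i,N}=M_{i,N}(T)\big(\E\langle M_{i,N}\rangle(T)\big)^{-1/2}$, precisely the quantity appearing in the conclusion of Theorem~\ref{T:1-3}.

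With these identifications, the first hypothesis of the corollary is literally the statement that $\langle M_{i,n}\rangle(T)\big/\E\langle M_{i,n}\rangle(T)\to 1$ in probability for $i=1,2$, which is the first condition of Theorem~\ref{T:1-3}. For the cross-variation condition I would use the pointwise bound
$$
\big|\langle M_{1,n},M_{2,n}\rangle(T)\big|\leq\sum_{k=1}^n\int_0^T\big|f_{1,k}(t)f_{2,k}(t)\big|\,dt,
$$
take expectations (interchanging sum, integral and expectation is legitimate since the integrand is nonnegative), divide by $\big(\E\langle M_{1,n}\rangle(T)\big)^{1/2}\big(\E\langle M_{2,n}\rangle(T)\big)^{1/2}$, and invoke the second hypothesis of the corollary to conclude that this ratio tends to $0$ in $L^1$, hence in probability by Markov's inequality. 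This verifies the second condition of Theorem~\ref{T:1-3}, and the conclusion $(\eta_{1,N},\eta_{2,N})\to\mathcal{N}(0,I)$ in distribution follows at once.

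The argument is essentially bookkeeping once the martingale structure is in place; the only points needing a little care are the verification that the $M_{i,n}$ are genuine square-integrable martingales (where the standing square-integrability of the $f_{i,k}$ enters) and the computation of the joint quadratic variation, which relies on the mutual independence of the $w_k$ to kill the off-diagonal terms. Passing from the $L^1$ bound on $\langle M_{1,n},M_{2,n}\rangle(T)$ to convergence in probability is the routine Markov-inequality step, and no additional estimates are required.
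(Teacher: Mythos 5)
Your argument is essentially identical to the paper's: both reduce the corollary to the martingale central limit theorem (Theorem \ref{T:1-3}) by recognizing $\eta_{i,N}=M_{i,N}(T)\big(\E\langle M_{i,N}\rangle(T)\big)^{-1/2}$ and reading off the bracket and cross-bracket from the independence of the $w_k$; the paper normalizes its martingales so that $\E\langle M_{i,n}\rangle(T)=1$ while you keep them un-normalized, which is purely cosmetic. One step, however, needs repair. You bound the cross-variation by $\sum_{k=1}^n\int_0^T|f_{1,k}(t)f_{2,k}(t)|\,dt$ and then invoke the second hypothesis of the corollary; but that hypothesis controls $\sum_{k}\E\bigl|\int_0^T f_{1,k}(t)f_{2,k}(t)\,dt\bigr|$, which is in general \emph{smaller} than $\sum_{k}\E\int_0^T|f_{1,k}(t)f_{2,k}(t)|\,dt$, so the stated hypothesis does not imply that your upper bound (after normalization) tends to zero. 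The fix is to stop one step earlier in the triangle inequality: $\bigl|\langle M_{1,n},M_{2,n}\rangle(T)\bigr|\le\sum_{k=1}^n\bigl|\int_0^T f_{1,k}(t)f_{2,k}(t)\,dt\bigr|$, whose expectation is exactly the numerator appearing in the hypothesis; Markov's inequality then yields convergence to zero in probability, and the rest of your argument goes through unchanged.
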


\begin{proof}
This follows from Theorem \ref{T:1-3} by taking
\[
M_{i,n}(t)=\frac{\sum_{k=1}^n\int_0^t f_{i,k}(s)dw_k(s)}
{\left(\E \int_0^T
 f^2_{i,k}(t)dt\right)^{1/2}},
\]
because $\E \langle M_{i,n}\rangle(T)=1$ and 
$$
\langle M_{1,n}, M_{2,n}\rangle(T)= \frac{\sum_{k=1}^N \int_0^T
 f_{1,k}(t)f_{2,k}(t)dt}{\left(\sum_{k=1}^N\E \int_0^T
 f^2_{1,k}(t)dt \right)^{1/2} \left( \sum_{k=1}^N\E \int_0^T
 f^2_{2,k}(t)dt\right)^{1/2}}.
$$
\end{proof}


\begin{thebibliography}{10}

\bibitem{WLLN}
A.~Adler and A.~Rosalsky, \emph{On the weak law of large numbers for normed
  weighted sums of i.i.d. random variables}, Internat. J. Math. \& Math. Sci.
  \textbf{14} (1991), no.~1.

\bibitem{Chow-Spde}
P.-L. Chow, \emph{Stochastic partial differential equations}, Chapman \&
  Hall/CRC Applied Mathematics and Nonlinear Science Series, Chapman \&
  Hall/CRC, Boca Raton, FL, 2007.

\bibitem{Hub1}
M.~Huebner, \emph{A characterization of asymptotic behaviour of maximum
  likelihood estimators for stochastic {PDE's}}, Math. Methods Statist.
  \textbf{6} (1997), no.~4, 395--415.

\bibitem{HKR}
M.~Huebner, R.~Z. Khas{\cprime}minski{\u\i}, and B.~L. Rozovskii, \emph{Two
  examples of parameter estimation}, Stochastic Processes: A volume in honor of
  {G}. {K}allianpur (S.~Cambanis, J.~K. Ghosh, R.~L. Karandikar, and P.~K. Sen,
  eds.), Springer, New York, 1992, pp.~149--160.

\bibitem{HubR}
M.~Huebner and B.~Rozovskii, \emph{On asymptotic properties of maximum
  likelihood estimators for parabolic stochastic {PDE}'s}, Probab. Theory
  Related Fields \textbf{103} (1995), 143--163.

\bibitem{KY}
A.~Korostelev and G.~Yin, \emph{Estimation of jump points in high-dimensional
  diffusion modulated by a hidden {M}arkov chain}, Math. Methods Statist.
  \textbf{15} (2006), no.~1, 88--102.

\bibitem{LSh3}
R.~Sh. Liptser and A.~N. Shiryaev, \emph{Theory of martingales}, Mathematics
  and its Applications (Soviet Series), vol.~49, Kluwer Academic Publishers,
  Dordrecht, 1989.

\bibitem{LSh1}
\bysame, \emph{Statistics of random processes, {I}: {G}eneral theory, 2nd ed.},
  Applications of Mathematics, vol.~5, Springer, 2001.

\bibitem{LL}
W.~Liu and S.~V. Lototsly, \emph{Estimating speed and damping in the stochastic
  wave equation}, http://arxiv.org/abs/0810.0046.

\bibitem{Lot1}
S.~V. Lototsky, \emph{Parameter estimation for stochastic parabolic equations:
  asymptotic properties of a two-dimensional projection-based estimator}, Stat.
  Inference Stoch. Process. \textbf{6} (2003), no.~1, 65--87.

\bibitem{SafVas}
Yu. Safarov and D.~Vassiliev, \emph{The asymptotic distribution of eigenvalues
  of partial differential operators}, Translations of Mathematical Monographs,
  vol. 155, American Mathematical Society, Providence, RI, 1997.

\bibitem{Shir}
A.~N. Shiryaev, \emph{Probability, 2nd ed.}, Graduate Texts in Mathematics,
  vol.~95, Springer, 1996.

\end{thebibliography}

\def\cprime{$'$} \def\cprime{$'$} \def\cprime{$'$} \def\cprime{$'$}
  \def\cprime{$'$} \def\cprime{$'$}
\providecommand{\bysame}{\leavevmode\hbox to3em{\hrulefill}\thinspace}
\providecommand{\MR}{\relax\ifhmode\unskip\space\fi MR }
\providecommand{\MRhref}[2]{%
  \href{http://www.ams.org/mathscinet-getitem?mr=#1}{#2}
}
\providecommand{\href}[2]{#2}

\end{document}